\def\Z{\mathbb Z}
\def\R{\mathbb R}
\def\loc{\textup{loc}}
\def\det{\textup{det}\,}
\def\exp{\textup{exp}}
\def\fix{\textup{Fix}\,}
\def\inv{^{-1}}
\def\loc{\textup{loc}}
\newcommand{\Cst}{\textup{Const}}
  \def\eps{\epsilon}    
    \def\HD{\textup{dim}_{\textup{H}}\,}
\def\mult{\kappa} \theoremstyle{plain} \newcommand{\Per}{\textup{Per}\,} \newtheorem{theo}{\bf Theorem}[section]
\newtheorem{lemm}[theo]{\bf Lemma} \newtheorem{sublemm}[theo]{\bf Sublemma} \newtheorem{prop}[theo]{\bf
  Proposition}%[section]
\newtheorem{coro}[theo]{\bf Corollary}%[section]
 \newtheorem{Fact}[theo]{\bf Fact} \newtheorem*{mthm}{\bf Main Theorem}
\theoremstyle{remark} \newtheorem{rema}[theo]{\bf Remark} 
  \newtheorem{exem}[theo]{\bf Example}
 \newtheorem{defi}[theo]{\bf Definition}
\title{On the Hausdorff dimension of Newhouse phenomena}
\date{\today}
\author{Pierre Berger}
\address{Pierre Berger\\
  Laboratoire Analyse, G\'eom\'etrie \& Applications 
  CNRS-UMR 7539 Institut Galil\'ee Universit\'e Paris 13\\
99 avenue J.B. Cl\'ement - 93430 Villetaneuse - France}
\email{{\tt pierre.berger@math.univ-paris13.fr}}
\urladdr{\href{http://www.math.sunysb.edu/~berger/}{http://www.math.sunysb.edu/~berger/}}
\author{Jacopo De Simoi}
\address{Jacopo De Simoi\\
  Department of Mathematics\\
  University of Toronto\\
  40 St George St. Toronto, ON, Canada M5S 2E4}
\email{{\tt jacopods@math.utoronto.ca}}
\urladdr{\href{http://www.math.utoronto.ca/jacopods}{http://www.math.utoronto.ca/jacopods}}
\thanks{PB is partially financed by the Balzan project of J. Palis and the Brazilian-French Network in Mathematics. JDS
  acknowledges partial NSERC support.}
\begin{document}

\begin{abstract}
  We show that at the vicinity of a generic dissipative homoclinic unfolding of a surface diffeomorphism, the Hausdorff
  dimension of the set of parameters for which the diffeomorphism admits infinitely many periodic sinks is at least
  $1/2$.
\end{abstract}
\maketitle
\section*{Introduction}%
The main goal of Palis program~\cite{Pa00} is to prove that given a generic, $d$-dimensional parameter family of
$C^r$-diffeomorphisms of a compact manifold ($r\ge 1$, $d\ge 1$), Lebesgue almost every parameter has the following
property:
\begin{quote}
  \textit{There exist finitely many invariant probabilities such that the
    union of their basins has full Lebesgue measure in the manifold.}
\end{quote}
According to the Palis program, homoclinic tangencies represent the main obstructions for a surface diffeomorphism to be
uniformly hyperbolic.  More precisely, Palis conjectured that every smooth surface diffeomorphism which is not uniformly
hyperbolic can be perturbed to one having a hyperbolic periodic point $P$ whose stable and unstable manifolds have a
quadratic tangency.  This result has been proved in the $C^1$-topology by Pujals and Sambarino (see~\cite{PS00}; see
also~\cite{Pujals-Crovisier} for a weaker $C^1$-version in higher dimension, and~\cite{MSS,Dujardinlyubitch} for a
weaker version in complex dynamics).  Homoclinic tangencies are associated to a remarkable dynamical feature discovered
by Newhouse (see~\cite{Newhouse,Ne79}) and which is nowadays referred to as the \emph{Newhouse phenomenon}: in any
neighborhood of a dissipative surface diffeomorphism exhibiting a non-degenerate homoclinic tangency, there exists a
residual set of diffeomorphisms which admit infinitely many periodic sinks.  Adapting some of Newhouse's results,
Robinson (see~\cite{Ro83}) later proved that the phenomenon takes place for a residual set of parameters in a
one-parameter family of diffeomorphisms which non-degenerately unfold an homoclinic tangency.

It is natural to ask whether this topologically significant behavior is also relevant from the point of view of
probability (or more precisely, of \emph{prevalence}).  The first attempt towards a measure theoretic understanding of
Newhouse phenomena is due to Tedeschini-Lalli and Yorke (see~\cite{TLY}): they considered a one-parameter unfolding of a
homoclinic tangency involving a linear horseshoe, and showed that the set of parameters whose corresponding
diffeomorphism admits infinitely many periodic \emph{simple sinks} (i.e. sinks obtained with the Newhouse construction)
is a null set for Lebesgue measure.  In the same setting (one-parameter unfolding of an homoclinic tangency involving a
linear horseshoe) Wang (see~\cite{Wang90}) proved that the Hausdorff dimension of the parameter set of diffeomorphisms
admitting an infinite number of periodic simple sinks is strictly positive and smaller than $1/2$.

More recently, Gorodetski and Kaloshin (see~\cite{Kaloshin}) obtained the mea\-sure-ze\-ro result in a much broader
setting: they introduced a quantitative notion of combinatorial complexity of periodic orbit visiting a neighborhood of a
homoclinic tangency, which they call \emph{cyclicity}\footnote{ In their terminology, \emph{simple sinks} which were
  considered above correspond to \emph{cyclicity one sinks}.}.  Their result shows that a \emph{prevalent} dissipative
surface diffeomorphism in a neighborhood of one exhibiting a non-degenerate homoclinic tangency has only finitely many
sinks of cyclicity which is either bounded or negligible with respect to the period of the orbit.

The techniques of~\cite{Ne79,Ro83} do not apply to conservative surface diffeomorphisms; on the other hand, a clear
analog of the Newhouse phenomenon still occurs in a vicinity of conservative diffeomorphisms exhibiting non-degenerate
homoclinic tangencies, with elliptic islands filling in for the r\^ole of sinks.  This result was finally established by
Duarte and Gonchenko--Shilnikov (see~\cite{Duarte99,Gonchenko-Shilnikov} and~\cite{Duarte08} for the one-parameter
version).  In~\cite{JMD} the second author of this article proved an analog to Tedeschini-Lalli--Yorke and Wang result
for the Standard Family of conservative diffeomorphisms in the large parameters regime: the set of (sufficiently large)
parameters for which the Standard Family admits infinitely many simple sinks has zero Lebesgue measure and its Hausdorff
dimension is not smaller than $1/4$.
%\footnote{ The
  %techniques described in the paper in fact allow to prove that the Hausdorff Dimension is also not larger than $1/2$.}

In this paper we obtain a similar lower bound on the Hausdorff dimension for dissipative surface diffeomorphisms.  We
prove that the Newhouse parameter set for a generic family of sufficiently smooth diffeomorphisms (nondegenerately)
unfolding a homoclinic tangency has Hausdorff dimension not smaller than $1/2$.  It is important to stress that our
lower bound takes into account non-simple sinks (and thus does not contradict Wang's result) and moreover does not
assume linearity of the horseshoe.  The proof of our result hinges on two crucial ingredients: the first one
(Theorem~\ref{Newhouserev}) is an improved version of Newhouse construction of a wild hyperbolic set; the second one is
Lemma~\ref{MisuRen} (proved in~\cite{MisuRen}), which provides precise estimates on the length of the \emph{stability
  range} of a sink which is created by unfolding a homoclinic tangency via the Newhouse construction.  As a consequence
of these results, we obtain that the Hausdorff dimension of the simple Newhouse parameter set for strongly dissipative
H\'enon like families is close to one $1/2$ but not greater than $1/2$ (see Corollary~\ref{c_upperBound}).  We conclude
by using Palis-Takens renormalization (see Theorem~\ref{PT2}).

 %we obtain a weaker, but more general upper bound on the Hausdorff dimension of the simple Newhouse
%parameter set (see Corollary~\ref{c_upperBound}), which we prove to be not larger than $1/2$.

The paper is organized as follows: in Section~\ref{s_statement} we give a precise statement of our Main Theorem, and in
Section~\ref{s_proof} we collect the results used to give its proof; Appendix~\ref{s_proofNewhouse} provides a proof of
Theorem~\ref{Newhouserev} based on renormalization of H\'enon-like maps.

\section{Statement of the Main Theorem}\label{s_statement}
\subsection{Main definitions}
\label{ss_mainDefs} 
Let $f$ be a $C^r$-diffeomorphism of a surface $M$, $r\ge 1$.  A point $p\in M$ is a \emph{periodic sink} if $p$ is
periodic, $f^n(p)=p$, and all eigenvalues of the differential $D_pf^n$ have modulus less than $1$.  A point $p\in M$ is
a \emph{saddle periodic point} if $D_pf^n$ has one eigenvalue of modulus less than $1$ the another one of modulus
greater than $1$.  The \emph{local stable and unstable manifolds} of $p$ are respectively:
\begin{align*}
  W^s_\epsilon(p; f) &:= \{y\in M: \epsilon> d(f^n(p),f^n(y))\to 0,\; 0\le n\to +\infty\},\\
  W^u_\epsilon(p; f) &:= \{y\in M: \epsilon> d(f^n(p),f^n(y))\to 0,\; 0\ge n\to -\infty\}.
\end{align*}
By Hadamard--Perron Theorem, for sufficiently small $\epsilon$, they are embedded $C^r$-curves; on the other hand the
\emph{stable and unstable manifolds} of $p$
\begin{align*}
  W^s(p; f)&:= \bigcup_{n\le 0} f^n(W^s_\epsilon(p; f))&%
  W^u(p; f)&:= \bigcup_{n\ge 0} f^n(W^u_\epsilon(p; f))
\end{align*}
are immersed submanifolds.  We say that a saddle point $p$ has a \emph{homoclinic tangency} if these two immersed
submanifolds are tangent at a point.

A family $(f_a)_{a\in \mathbb R}$ of diffeomorphisms of a surface $M$ is of class $C^r$ if the map $ \mathbb R \times
M\ni (a,z)\mapsto f_a(z)\in M$ is of class $C^r$.  It is well known that, if $f_0$ has a saddle fixed point $\Omega_0$,
then this point persists as a saddle fixed point $\Omega_a$ of $f_a$ for $a$ small.  Hence there exists a $C^r$-chart
$\phi_a$ of a neighborhood $D$ of $\Omega_a$ which maps $\Omega_a$ to $0$, $W^s_\epsilon(\Omega_a; f_a)$ onto
$\{0\}\times (-1,1)$ and $W^u_\epsilon(\Omega_a; f_a)$ onto $(-1,1)\times \{0\}$.  By~\cite{HPS}, the following map can
be chosen to be $C^r$:
\[\phi\colon (-\eta,\eta)\times D\ni  (a, z) \mapsto (a,\phi_a(z))\in  (-\eta, \eta) \times \R^2.\]
We say that the family $(f_a)_a$ \emph{nondegenerately unfolds a homoclinic tangency at $a_0$ of the periodic point $\Omega$}, if
there exist $P=(p,0)\in D$ sent by $f^N_{a_0}$ to a point $Q=(0,q)\in D$, and a neighborhood $D_P\ni P$, such that, for
every $a$ sufficiently small, $f_a^ND_P\subset D$, and $f_a^N|D_P$ has the form:
\[ P+(x, y)\in D_P\mapsto Q+(\xi x^2+a +\gamma \cdot y, \zeta \cdot x)+ E_a(x, y)\in D\] where $\zeta\in \R$,
$\xi,\gamma$ are non-zero constants (all independent of $a$) and $E_a=(E_a^1,E_a^2)\in C^{r}(\R\times \R^{2}, \R^2)$
satisfies at $(x,y)=0$ and $a=0$:
\begin{equation*}
  \left\{\begin{array}{l}
      E_a^{1}=\partial_x E_a^{1}=\partial_y E_a^{1}=\partial_a E_a^{1}=\partial_{xx} E_a^{1}=0\\
      E_a^{2}=\partial_x E_a^{2}=0\end{array}
  \right.
\end{equation*}
Note that non-degenerate homoclinic unfolding tangency are stable by $C^2$ perturbations, and open and dense among
families having a homoclinic tangency at some parameter.  Let us illustrate this fact on a crucial class of maps of
$\R^2$.
\begin{defi}[H\'enon like maps]
  A H\'enon map is a map of $\R^2$ of the following form, with $a,b\in \R$:
  \[
  h_{ab}\colon \R^2\ni (x, y)\mapsto (x^2+a+y,b x)\in \R^2.
  \]
  A \emph{H\'enon $C^r$-like map} is a map $f$ of the form
  \[
  f_a\colon \R^2\ni (x, y)\mapsto (x^2+a+y,b x)+(A_a(x,y),b\cdot B_a(x,y))\in \R^2.
  \]
  where $A_a$ and $B_a$ are $C^r$-small maps.  The map $f_a$ is \emph{H\'enon $C^r$-$\delta$-like} if both $A_a$ and
  $B_a$ have $C^r$-norm at most $\delta$.

  If $A_a$ and $B_a$ depend on the parameter $a\in (-3,1)$ so that $(x,y,a)\mapsto A_a(x,y)$ and $(x,y,a)\mapsto
  B_a(x,y)$ are of class $C^r$ with norms both smaller than $\delta$, then the family $(f_a)_a$ is \emph{H\' enon
    $C^r$-$\delta$-like}.
  %%%%%%%%%%%%%%%%%%%%%%%%%%%%%%%%%%%%%%%%%%%%%%%%%%%%%%%%%%%%%%%%%%%%%%%%%%%%%%%%%%%%%%%%%%%%%%%%%%%%%%%%%%%%%%%%%%%%%%%%
  \end{defi}

  %
%The H\'enon family $(h_{ab})_a$
%A $C^r$-H\'enon like family $(f_a)_a$ is a family of $C^r$-diffeomorphisms of $\R^2$, which is close to a H\'enon family $(h_{ab})_a$, with:
%
%\[h_{ab}\colon \R^3\ni (a,x, y)\mapsto (x^2+a+y,b x)\in \R^2.\] By close we mean that there exists $C^2$-small functions
%
%$A,B\in C^r(\R^2\times \R,\R)$ so that for every $((x,y),a)\in \R^2\times \R$:
%
%\[f_a(x,y)=h_{ab}(x,y)+(A_a(x,y),b\cdot B_a(x,y))\]
%
\begin{exem}[Unfolding of homoclinic tangencies in H\'enon-like families]\label{HLMAP}
  The family $(h_{a0})_a$ nondegenerately unfolds a homoclinic tangency at $a_0=-2$ of the fixed point $\Omega$, with
  $\Omega_{a_0}:=(2,0)$.  Hence for $b$ small and $\delta$ small, any H\'enon $C^2$-$\delta$-like family $(f_{a})_a$
  nondegenerately unfolds a homoclinic tangency at a certain $a\approx a_0$ of the hyperbolic continuation of $\Omega$.
\end{exem}

\subsection{Main results}
\label{ss_mainResults} 
Here is the main result of this note:
\begin{mthm}
  Let $r$ be sufficiently large (e.g. $r\ge 24$) and let $(f_\mu)_{\mu\in \mathbb R}$ be a generic family of $C^r$
  surface diffeomorphisms (nondegenerately) unfolding a homoclinic tangency at $\mu_0$ of a periodic point $\Omega$.  If
  $|\det(D_{\Omega_{\mu_0}}f_{\mu_0})|<1$, then the following set has Hausdorff dimension at least $1/2$:
  \[
  \mathcal N:= \{\mu\in \mathbb R:\; f_\mu\text{ has infinitely many sinks}\}.
  \]
\end{mthm}
\begin{rema}%
  In the Main Theorem, we need the family to be sufficiently smooth and to satisfy some genericity conditions to satisfy
  the hypotheses of a version of Sternberg linearization theorem due to Sell (see~\cite{Sell}).  Even if it seems to be
  possible to weaken the assumptions (and prove the result for e.g.\ $C^3$-families not necessarily generic), we do not
  pursue this task here.
\end{rema}
To show the main Theorem, we will prove the following result:
%is result, we will prove the {\it a priori} weaker following resultAn immediate consequence of the Main  and Example~\ref{HLMAP} and remark is:
\begin{prop}\label{theoHen}
  For every H\'enon $C^4$-like family $(f_a)_a$, with small determinant, the following set has Hausdorff dimension at
  least $1/2$:
  \[
  \mathcal N:= \{a\in \mathbb R:\; f_a\text{ has infinitely many sinks}\}.
  \]
\end{prop}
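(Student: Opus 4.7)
The plan is to combine the two quantitative inputs of the paper, Theorem~\ref{Newhouserev} and Lemma~\ref{MisuRen}, to exhibit a Cantor subset of $\mathcal N$ of Hausdorff dimension at least $1/2$.

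First, I would apply Theorem~\ref{Newhouserev} to the H\'enon-like family $(f_a)_a$: this yields a parameter interval $I_0$ on which $f_a$ carries a wild hyperbolic horseshoe $\Lambda_a$ together with two branches of $W^s(\Lambda_a)$ and $W^u(\Lambda_a)$ that are persistently tangent across $I_0$. Quantitatively, for every large $n$ one extracts a finite family $\mathcal T_n\subset I_0$ of tangency parameters, associated with homoclinic tangencies of periodic orbits of period of order $n$, whose distribution is controlled by the Cantor counting inherent to the construction. Next, Lemma~\ref{MisuRen} attaches to every $\tau\in \mathcal T_n$ a stability window $W_\tau\ni\tau$ of length of order $|\det Df_a^n|^{1/2}\sim b^{n/2}$ on which $f_a$ carries a sink of period $n$ born from the unfolding of the tangency at $\tau$. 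A parameter $a$ lying in infinitely many such windows therefore belongs to $\mathcal N$.

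To reach the bound $1/2$, I would assemble the construction at multiple scales. Inside each window $W_\tau$ of scale $n$, renormalizing $f_a^n$ near the associated sink produces a new H\'enon-like family with much smaller effective Jacobian, to which Theorem~\ref{Newhouserev} and Lemma~\ref{MisuRen} again apply. Iterating produces at generation $k$ a nested family of $N_k$ intervals of length $\ell_k$ in which both the branching ratio $N_k/N_{k-1}$ and the contraction ratio $\ell_{k-1}/\ell_k$ are governed by the same power of $b$, namely $b^{n_k/2}$ for the period $n_k$ chosen at generation $k$. This shared scaling forces $\log N_k/\log(1/\ell_k)\ge 1/2$, so the intersection of the nested family is a Cantor set of Hausdorff dimension at least $1/2$, each of whose points lies in infinitely many windows and hence carries infinitely many distinct sinks.

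The main obstacle is the consistency of the iteration: after renormalization one must verify that the new family still satisfies the quantitative hypotheses of Theorem~\ref{Newhouserev} with the correct effective Jacobian, that the stability windows of generation $k+1$ nest genuinely inside those of generation $k$, and that the matching of the Newhouse counting with the Misurewicz--Ren window size by a common power of $b$ survives all generations. It is precisely this bookkeeping that pins the value $1/2$ as both the achievable lower bound here and the matching upper bound in Corollary~\ref{c_upperBound}.
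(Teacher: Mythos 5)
Your outline diverges from the paper's proof and contains several genuine gaps. The most serious is quantitative: you state that Lemma~\ref{MisuRen} produces a stability window $W_\tau\ni\tau$ of length of order $|\det Df_a^n|^{1/2}\sim b^{n/2}$. Both claims are wrong. The lemma's interval $I_{\Omega'}$ has length of order $\sigma^{-2}$, where $\sigma$ is the \emph{unstable} eigenvalue of $D_{\Omega'}f^{p'}_{a_0'}$ --- a quantity that Theorem~\ref{Newhouserev}(i) controls uniformly in $b$ by $\sigma\sim\Lambda^{p'}$, so it is not a power of $b$ at all. More importantly, $I_{\Omega'}$ sits at distance of order $\sigma^{-1}$ from the tangency parameter, so $\tau\notin W_\tau$. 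The $1/2$ in the theorem is produced precisely by the square-root relation between the window length $\sigma^{-2}$ and its offset $\sigma^{-1}$ from the reference parameter; if you drop the offset you lose the mechanism. This also shows your scaling arithmetic fails: if, as you write, the branching ratio and the contraction ratio were governed by the \emph{same} power of $b$ at every generation, the resulting Cantor set would have dimension $1$, not $1/2$. The correct bookkeeping (Proposition~\ref{p_construction} and Lemma~\ref{l_falconer}) gives windows of length $\ell$ distributed at distance of order $\ell^{\alpha}$ from arbitrary base parameters, with $\alpha = 1/2 - C/\log b$, which is then pushed to $1/2$ only at the very end by letting $b\to 0$.

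The second gap is structural. You propose to renormalize $f_a^n$ \emph{near the associated sink} and re-apply Theorem~\ref{Newhouserev}; but near a periodic sink $f_a^n$ is a local contraction with trivial dynamics, so there is no wild set to re-create and the hypotheses of Theorem~\ref{Newhouserev} are nowhere in sight. The paper's proof never iterates renormalization. It fixes, once and for all, the wild basic set $\tilde K(a)$ and the interval $I$ supplied by Theorem~\ref{Newhouserev}, and exploits two facts simultaneously: (a) for \emph{every} $a_1\in I$, item (ii) of that theorem provides a tangency for a continuation $\Omega_{a_1}$ of a point of $\tilde K$; and (b) the density of periodic orbits in $\tilde K$, quantified through Bowen's Markov partition and the bounded-distortion estimates on $\sigma_n,\lambda_{-n'}$, lets one replace $\Omega_{a_1}$ by nearby periodic points of arbitrarily high period, yielding the nested geometric sequence of sink windows $(I_k(a_1))_k$ around any base parameter. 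This is what makes the Cantor construction work at \emph{all} interior endpoints at once, which your renormalization scheme would have to engineer from scratch at each generation. In short: the ingredient you are missing is the persistence of one wild hyperbolic set over the whole parameter interval, combined with the offset $\sim(\text{length})^{1/2}$ in Lemma~\ref{MisuRen}; the iterated-renormalization route as described does not give you either.
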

Proposition~\ref{theoHen} implies the Main Theorem, by the following result; for
the proof see {(\cite[Theorem 1.4 and Remark 1.5]{MisuRen})}
\begin{theo}[Palis-Takens revisited]\label{PT2}
  For any $\delta>0$ and any one-parameter family $(f_\mu)_\mu$ as in the Main Theorem, there exists, for each positive
  integer $n$, a reparametrization $\mu= M_n(\tilde\mu)$ of the $\mu$ variable and $\tilde \mu$-dependent coordinate
  transformations $(\tilde x,\tilde y)\mapsto (x,y)= \Psi_{n,\tilde \mu} (\tilde x, \tilde y)$ such that:
  \begin{itemize}
  \item for each compact set $K$ in the $(\tilde x,\tilde y,\tilde \mu)$-space, the images of $K$ under the maps
    \[(\tilde x,\tilde y, \tilde \mu)\mapsto (x,y,\mu)= ( \Psi_{n,\tilde \mu} (\tilde x, \tilde y),M_n(\tilde\mu))\]
    converge, for $n\to \infty$, in the $(x,y,\mu)$ space to $(P,\mu_0)$;
  \item the domains of the maps
    \[(\tilde x,\tilde y, \tilde \mu)\mapsto \mathcal R f_{\tilde \mu}(\tilde x, \tilde y):=\Psi_{n,\tilde
      \mu}^{-1}\circ ((f|D)^n\circ (f^N|D_P))_{M_n(\tilde \mu)} \circ \Psi_{n,\tilde \mu}(\tilde x,\tilde y)\] converge,
    for $n\to \infty$, to all $\R^3$;
  \item when $n$ is sufficiently large, the family $(\mathcal R f_{\tilde \mu})_{\tilde \mu}$ is H\'enon $C^4$-$\delta$-like
    and with determinant smaller than $\delta$.
  \end{itemize}
\end{theo}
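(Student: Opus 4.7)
The plan is to implement a parameter-smooth, $C^4$ version of the classical Palis--Takens renormalization scheme near a nondegenerately unfolded homoclinic tangency, with explicit control on the decay of the determinant as $n\to\infty$. I would proceed in three steps.

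\emph{Linearization near the saddle.} Using the genericity of the family and the assumption $r\geq 24$, I would invoke a parameter-dependent version of Sell's linearization theorem. Up to a $C^r$-change of the chart $\phi_a$ that still straightens $W^s_\epsilon$ and $W^u_\epsilon$, $f_\mu$ acts on $\phi_a(D)$ as the linear map $(x,y)\mapsto (\sigma_\mu x,\lambda_\mu y)$ with $|\sigma_\mu|>1>|\lambda_\mu|$; by the dissipativity hypothesis, $|\sigma_\mu\lambda_\mu|<1$ for $\mu$ near $\mu_0$. Composing with the explicit form of $f_\mu^N|_{D_P}$ given in the nondegenerate unfolding condition, one obtains, for $P+(x,y)\in D_P$,
\[
(f|D)^n\circ (f^N|_{D_P})\bigl(P+(x,y)\bigr)=\bigl(\sigma_\mu^n(\xi x^2+a+\gamma y),\ \lambda_\mu^n(q+\zeta x)\bigr)+\widehat E_{n,a}(x,y),
\]
where $a=\mu-\mu_0$ and $\widehat E_{n,a}$ is the image of $E_a$ under the diagonal action of $f_\mu^n|D$.

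\emph{Rescaling.} I would set $\alpha_n=(\sigma_{\mu_0}^n\xi)\inv$, $\beta_n=(\sigma_{\mu_0}^{2n}\xi\gamma)\inv$, $y_n^*=\lambda_{\mu_0}^n q$, and define
\[
\Psi_{n,\tilde\mu}(\tilde x,\tilde y)=P+(\alpha_n \tilde x,\ \beta_n \tilde y+y_n^*),\qquad M_n(\tilde\mu)=\mu_n^*+\tilde\mu\cdot(\sigma_{\mu_0}^{2n}\xi)\inv,
\]
where $\mu_n^*$ is chosen so that $a(\mu_n^*)=-\gamma y_n^*$, absorbing the additive shift produced by the second coordinate after the first iteration. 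A direct calculation then yields
\[
\mathcal R f_{\tilde\mu}(\tilde x,\tilde y)=\bigl(\tilde x^2+\tilde\mu+\tilde y,\ \tilde b_n \tilde x\bigr)+\mathcal E_{n,\tilde\mu}(\tilde x,\tilde y),
\]
where $\tilde b_n=\zeta\gamma(\sigma_\mu\lambda_\mu)^n\to 0$ precisely because of the dissipativity hypothesis. By construction, the images under $(\Psi_{n,\tilde\mu},M_n)$ of any compact $K\subset\R^3$ contract to $(P,\mu_0)$; conversely, the domain of $\mathcal R f_{\tilde\mu}$, being $\Psi_{n,\tilde\mu}^{-1}(D_P)$ in the $(\tilde x,\tilde y)$-variables and $M_n\inv$ of a fixed neighborhood of $\mu_0$ in the parameter variable, expands to all of $\R^3$ since $\alpha_n,\beta_n\to 0$.

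\emph{$C^4$-control of the error and main obstacle.} The remaining task is to bound $\|\mathcal E_{n,\tilde\mu}\|_{C^4(K)}\leq\delta$ on compacts. The vanishing orders imposed on $E_a$ at $(x,y,a)=(0,0,0)$ provide Taylor estimates of the form $E_a^1=O(|x|^3+|y|^2+|a|^2+|xy|+|xa|+|ya|)$ and $E_a^2=O(x^2+|y|+|a|)$. Substituting $(x,y,a-a(\mu_n^*))=(\alpha_n\tilde x,\ \beta_n\tilde y,\ \tilde\mu/(\xi\sigma_{\mu_0}^{2n}))$ and multiplying by the output rescaling factors $\alpha_n\inv$ and $\beta_n\inv$, each monomial in $\widehat E_{n,a}$ contributes $O(\sigma_{\mu_0}^{-n})$ on compacts; the same decay propagates to $(\tilde x,\tilde y,\tilde\mu)$-derivatives up to order four, since each extra derivative adds a factor $\alpha_n$, $\beta_n$, or $(\xi\sigma_{\mu_0}^{2n})\inv$ that only reinforces the decay. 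The main delicacy, and the source of the high-smoothness hypothesis, lies precisely here: one must verify that the vanishing orders encoded in the nondegenerate unfolding condition suffice to absorb the compounded exponential factors through four derivatives. Together with the parameter-smooth linearization of the first step, this yields all three conclusions of the theorem.
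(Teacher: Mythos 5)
The paper itself does not prove this theorem: it cites \cite[Theorem~1.4 and Remark~1.5]{MisuRen}, so I can only compare your proposal with the classical Palis--Takens scheme that the reference carries out. Your overall strategy (Sell linearization at the saddle, quadratic rescaling around the tangency, control of the Taylor remainder) is indeed the right one and matches the spirit of the reference. However, the explicit formulas you write down are incorrect in two places, one of which is a genuine conceptual gap.

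First, the choice $a(\mu_n^*)=-\gamma y_n^*$ does not kill the constant term of the first renormalized coordinate. With $\Psi_{n,\tilde\mu}(\tilde x,\tilde y)=P+(\alpha_n\tilde x,\,\beta_n\tilde y+y_n^*)$, one has $\Psi_{n,\tilde\mu}^{-1}(X,Y)=\bigl((X-p)/\alpha_n,(Y-y_n^*)/\beta_n\bigr)$, and the constant part of the first coordinate of $\mathcal Rf_0(0,0)$ is $\alpha_n^{-1}\bigl(\sigma_\mu^n(a_n^*+\gamma y_n^*)-p\bigr)$. Your choice makes this equal to $-\alpha_n^{-1}p=-\xi\sigma_\mu^n p$, which diverges since $p\neq 0$ (recall $P=(p,0)$ lies off the saddle). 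The correct normalization is $a_n^*+\gamma y_n^*=p\,\sigma_\mu^{-n}$, i.e.\ $a_n^*\approx p\,\sigma_\mu^{-n}-\gamma\lambda_\mu^n q$, and in practice it is fixed by an implicit function theorem argument rather than a closed formula.

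Second, and more importantly, the claim that ``each monomial in $\widehat E_{n,a}$ contributes $O(\sigma_{\mu_0}^{-n})$ on compacts'' is false. The unfolding condition imposes nothing on $\partial_{xa}E^1_a(0)=:c_2$, which is generically nonzero. Its contribution to the first coordinate of $\mathcal Rf_{\tilde\mu}$ is $\alpha_n^{-1}\sigma_\mu^n\,c_2\,x\,a = c_2\,\sigma_\mu^n\,\tilde x\,(a_n^*+\gamma_n\tilde\mu)$, and with the corrected $a_n^*$ one gets $\sigma_\mu^n a_n^*\to p$, so this term converges to $c_2 p\,\tilde x$. This is an $O(1)$ linear term in $\tilde x$, not $o(1)$, and it cannot be made $C^4$-small by taking $n$ large. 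Absorbing it requires completing the square through a $\tilde\mu$-dependent affine translation in the $\tilde x$-variable, i.e.\ genuinely $\tilde\mu$-dependent charts $\Psi_{n,\tilde\mu}$; your proposed $\Psi_{n,\tilde\mu}$ has no $\tilde\mu$-dependence at all and hence cannot do the job. The statement of the theorem explicitly allows $\tilde\mu$-dependent coordinate changes precisely for this reason, and the ``vanishing orders suffice to absorb the exponential factors'' heuristic you appeal to breaks down unless you build that dependence into the rescaling. Fixing this (and verifying the $C^4$-bounds after the extra shift, which is where the $C^r$-linearization with $r$ large is really used) is the missing content of the proof.
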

\section{Proof of Proposition~\ref{theoHen}}\label{s_proof}
The proof is based on the Newhouse construction of a wild hyperbolic set and fine estimates in parameter space on the
distribution of parameters with many attracting periodic points.
\subsection{Wild hyperbolic sets}
We recall that an invariant compact set $K\subset M$ is \emph{hyperbolic} for a diffeomorphism $f$ of $M$ if the
restriction of the tangent bundle $TM$ to $K$ splits into two $Df$-invariant directions $E^s$ and $E^u$ which are
contracted, respectively, by $Df$ and $Df^{-1}$.

The set $K$ is \emph{basic} if, moreover, it is transitive and the closure of the set of periodic points of $f$ contains
$K$.  This enables us to define, for $\epsilon>0$, as we did for periodic points, the $\epsilon$-local stable and
unstable manifolds $W^s_\epsilon (z)$ and $W^u_\epsilon (z)$ of points $z\in K$; let $W^s_\epsilon (K)=\bigcup_{z\in
  K} W^s_\epsilon(z)$ and $W^u_\epsilon (K)=\bigcup_{z\in K} W^u_\epsilon(z)$.

% The set $W^s_\epsilon (K)=\cup_{z\in K} W^s_\epsilon(z)$ has a lamination structure~\cite{berlam}, its transversal
% dimension $d_s$ is the Hausdorff dimension of $W^s_\epsilon (K)\cap W^u_\epsilon(z)$, for any $z\in K$ (by
% transitivity, this definition does not depend on $z$, nor $\epsilon$ small).  We define likewise the transversal
% dimension of the unstable lamination $W^u_\epsilon (K)\cap W^s_\epsilon(z)$.
% added from Robinson

For every $f'$ $C^1$-close to $f$, there exists (see~\cite{Yoyointro}), a (continuous) embedding $i_{f'}$
\[
i_{f'}\colon K\hookrightarrow \R^2,
\]
so that $i_{f}$ is the canonical injection and $f'\circ i_{f'}= i_{f'}\circ f|K$.  The map $f'\mapsto i_{f'}$ is smooth
from a $C^1$-neighborhood of $f$ into the space $C^0(K,\R^2)$.  Let us denote for $f'$ $C^1$-close to $f$ and
$\Omega\in K$:
\begin{align*}
  K(f')&:= i_{f'}(K)&
  \Omega({f'})&:=i_{f'}(\Omega).
\end{align*}

$K(f')$ (resp. $\Omega({f'})$) is called the \emph{hyperbolic continuation} of $K$ (resp. $\Omega$).  In particular
$K(f)=K$.
\begin{defi}
  We say that a basic set $K$ is \emph{wild}, if there exists $L>0$ such that, for every $C^2$-perturbation $f'$ of $f$,
  there exists $\Omega_{f'}\in K$ such that $W^s_L(\Omega_{f'}(f'), f')$ and $W^u_L(\Omega_{f'}(f'), f')$ have a
  quadratic tangency.
\end{defi}
\begin{rema}
  Even if $W^s_L(\Omega_{f}(f), f)$ and $W^u_L(\Omega_{f}(f), f)$ have a quadratic tangency, there exists $f'$
  arbitrarily close to $f$ so that $W^s_L(\Omega_{f}(f'), f')$ and $W^u_L(\Omega_{f}(f'), f')$ are not tangent.  Hence
  $\Omega_{f}(f')$ is in general not equal to $\Omega_{f'}(f')$.  Since $K$ is totally disconnected, the map $f'\mapsto
  \Omega_{f'}\in K$ is not continuous; neither is the map $f'\mapsto \Omega_{f'}(f')\in K(f')$.
\end{rema}

By Example~\ref{HLMAP}, the family of maps $(f_a)_{a}$ nondegenerately unfolds a homoclinic tangency of a fixed point
$\Omega$ with small determinant.  By~\cite[Theorem 3]{Ne79}, there exists $a_0\approx -2$ such that $f_{a_0}$ leaves
invariant a wild basic set $\tilde K$.  Actually, we shall give a modern proof of this result in order to bound the
expansion of the unstable direction uniformly on the determinant of the H\'enon-like map.

\emph{In the case of family of diffeomorphisms $(f_a)_a$, to make the notation less cumbersome, $K(a)$ and $\Omega(a)$
  stand for the hyperbolic continuations $K(f_a)$ and $\Omega(f_a)$ respectively.}

\begin{theo}[Newhouse Theorem revisited]\label{Newhouserev}
  There exists $\Lambda>1$ such that for any H\'enon $C^4$-like family $(f_a)_a$ with small determinant, there exist
  $a_0\approx -2$, $L>0$, an open parameter interval $I\ni a_0$ and a basic set $\tilde K(a_0)$, such that for any
  $a_1\in I$, the following properties hold:
  \begin{enumerate}[(i)]
  \item there exists a Riemannian metric for which the expansion of $\tilde K(a_1)$ in the unstable direction is at least
    $\Lambda$;
  \item there exists $\Omega_{a_1}(a_0)\in \tilde K(a_0)$ close to $\Omega(a_0)$ so that $W^s_L(\Omega_{a_1}(a), f_{a})$
    is tangent to $W^u_L(\Omega_{a_1}(a), f_{a})$ at $a=a_1$, via a quadratic tangency, that $(f_a)_a$ nondegenerately
    unfolds.
    % \todo[inline]{I am honestly having a hard time to follow you here, what is the dependence of $\Omega_{a}(b)$ on $a$
    % and $b$? is $\Omega_{a_0}(a_0)=\Omega(a_0)$? Is, for any $a\in I$, $\Omega_{a_0}(a)$ the analytic continuation of
    % $\Omega(a_0)$?}
  \end{enumerate}
  In particular, $\tilde K(a_0)$ is wild.
\end{theo}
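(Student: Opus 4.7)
The plan is to build the basic set $\tilde K(a_0)$ as the hyperbolic continuation of the Chebyshev horseshoe associated to $p(x) = x^2 - 2$, establish the uniform expansion (i) via adapted cone fields, and then exploit the Palis--Takens renormalization of Theorem \ref{PT2} iteratively to produce, for every $a_1 \in I$, a periodic orbit of $\tilde K(a_0)$ with a non-degenerately unfolding quadratic homoclinic tangency at the parameter $a_1$. For the base horseshoe, the one-dimensional polynomial $p$ is conjugate to angle doubling on the circle, and for $a_0$ slightly below $-2$ one obtains an invariant Cantor set on which $p$ is a full $2$-shift with uniform expansion bounded below by some $\Lambda_0 > 1$ in an adapted metric. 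An invariant cone-field argument near the $y = 0$ axis lifts this to the 2D H\'enon-like map $h_{a_0,b}$ with small $|b|$: it yields a hyperbolic basic set $\tilde K(a_0)$ whose unstable expansion inherits the one-dimensional bound $\Lambda > 1$, uniformly in the small determinant and under $C^4$-small perturbations within H\'enon-like families. Structural stability of hyperbolic sets then provides the open interval $I \ni a_0$ on which the continuation $\tilde K(a_1)$ still satisfies (i).

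For property (ii), I would iterate the Palis--Takens renormalization. Since $(f_a)_a$ is a H\'enon-like family, Example \ref{HLMAP} says it nondegenerately unfolds a homoclinic tangency of the fixed point $\Omega$ at a parameter close to $-2$. Theorem \ref{PT2} then produces, for each large $n$, a reparametrization $a = M_n(\tilde a)$ and a rescaling whose composition with the $n$-th-return map of $f_a$ is a new H\'enon $C^4$-$\delta'$-like family $(\mathcal R f_{\tilde a})_{\tilde a}$ with arbitrarily small determinant. The fixed point of $\mathcal R f$ at $\tilde a \approx -2$ corresponds to a periodic point of $f$ of period of order $n$ lying in $\tilde K(a_0)$ close to $\Omega(a_0)$, and $(\mathcal R f_{\tilde a})_{\tilde a}$ nondegenerately unfolds a quadratic tangency of that fixed point. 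Iterating this construction along a tree of scales $(n_1, n_2, \ldots)$ produces, at a set of parameters dense in $I$, quadratic tangencies carried by periodic orbits of $\tilde K(a_0)$ accumulating at $\Omega(a_0)$.

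To upgrade the dense set of tangency parameters to every $a_1 \in I$, I would argue that for any target $a_1$ one selects a branch $(n_k)_k$ of the renormalization tree whose composed reparametrization $M_{n_1} \circ \cdots \circ M_{n_k}$ sends the next-level tangency parameter to $a_1$; the associated nested periodic orbits then converge via hyperbolic continuation to the desired $\Omega_{a_1}(a_0) \in \tilde K(a_0)$, and the quadratic nondegenerate unfolding is preserved at each step. The main obstacle is precisely this surjectivity: one must control the distortion of the reparametrizations $M_n$ and verify that, as the combinatorial branching varies, their images tile neighborhoods of each previously constructed tangency parameter, so that countable iteration fills $I$ completely. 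A subsidiary difficulty is to keep the unstable expansion bound $\Lambda > 1$ uniform in the arbitrarily small determinant throughout; this relies on the adapted metric and cone-field construction at the base level exploiting that $|p'(x)| = |2x| \geq 2$ is bounded away from $1$ on the invariant Chebyshev set.
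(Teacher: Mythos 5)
There is a genuine gap, and in fact your proposal misses the essential mechanism of the theorem. You never invoke Newhouse thickness or the Gap Lemma, yet these are precisely what makes $\tilde K$ \emph{wild}: recall that wildness demands a homoclinic tangency for \emph{every} $C^2$-perturbation $f'$ of $f_{a_1}$, not merely for nearby parameters along the given one-parameter family $(f_a)_a$. Your iterated renormalization scheme, even if it worked as described, would only place tangencies at a dense set of parameters inside $I$; it says nothing about arbitrary $C^2$-perturbations. The paper's proof constructs two Cantor sets, $C_1(f_a)$ with unstable thickness of order $b^2$ and $K_2(f_a)$ with stable thickness $\gg b^{-2}$ obtained via a Misiurewicz pre-renormalization, links them into a single basic set $\tilde K$ via Newhouse's Lemma 8, and uses $\tau^u(C_1)\cdot\tau^s(K_2)\gg 1$ together with the Gap Lemma to get a tangency that is robust under \emph{all} $C^2$-perturbations. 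Your choice of $\tilde K$ as just the hyperbolic continuation of the Chebyshev horseshoe is therefore too small: its stable and unstable thicknesses both degenerate like $b^2$ as $b\to 0$, so the thickness product is tiny and the Gap Lemma does not apply. One must include the deep-renormalization piece $K_2$.

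Separately, the surjectivity argument for property (ii) is not just unproven but appears to be false as you stated it. By Theorem \ref{PT2}, as $n\to\infty$ the reparametrizations $M_n$ send any compact $\tilde\mu$-set into shrinking neighborhoods of $\mu_0$; their images do not tile the interval $I$ around a previously constructed tangency parameter, they all collapse towards $a_0$. So you cannot reach an arbitrary $a_1\in I$ by following a single branch of your renormalization tree. The way the paper handles (ii) is different: one shows that $(f_a)_a$ nondegenerately unfolds the tangency associated to $\tilde K$, and then uses continuity (of the unfolding and of the hyperbolic continuation) to move the tangency across the whole interval $I$, selecting for each $a_1$ the appropriate $\Omega_{a_1}(a_0)\in\tilde K(a_0)$. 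Finally, on property (i): for the genuine $\tilde K$ containing $K_2$, the unstable expansion drops to roughly $\sqrt[N]{2}$ on the renormalized piece; the paper gets uniformity in $b$ because the renormalization depth $N$ stays bounded (it depends only on $\epsilon$ and the $C^4$-norm). Your cone-field argument, applied only to the base Chebyshev horseshoe, does not address this.
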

\begin{rema}
  Item (i) in the above theorem is not proved in the versions of the Newhouse Theorem currently available in the
  literature.  It will be important that $\Lambda>1$ does not depend on the (possibly very small) determinant of $f_a$.
\end{rema}
In Appendix~\ref{s_proofNewhouse} we give a proof of Theorem~\ref{Newhouserev} based not only on Newhouse thickness but
also on renormalization techniques.
\subsection{Parameters with many attracting periodic points}
In order to get refined estimates on the distribution of parameters with many attracting periodic points, we will use the
following:
\begin{lemm}[{\cite[Lemma 3.2]{MisuRen}}]\label{MisuRen}
Let $(f_a)_a$ be a H\'enon $C^2$-like family, of class $C^4$, such that a non-degenerate homoclinic unfolding holds with a periodic point $\Omega$  in a hyperbolic set $K$ at $a=a_0$. 

Then there exist $C>0$, $M\in \mathbb N$ and $N\ge 0$ such that for every periodic point $\Omega'$ of period $p'$ close to $\Omega$, a non-degenerate homoclinic unfolding holds with $\Omega'$ at $a_0'$ close to $a_0$. 

Moreover if $p'\ge M$, by denoting by $\sigma$ the unstable eigenvalue of $D_{\Omega'} f_{a_0'}^{p'}$, there exists a parameter interval $I_{\Omega'}$  such that:
\begin{itemize}
\item for every  $a\in I_{\Omega'}$, the map $f_a$ has an attracting cycle of minimal period $p'+N$,
\item the length of $I_{\Omega'}$ is in $[\sigma^{-2}/C,C \sigma^{-2}]$,
\item the distance between $I_{\Omega'}$  and  $a_0'$ is in $[\sigma^{-1}/C, C \sigma^{-1}]$.
\end{itemize}
%
  %Let $(f_a)_a$ be a H\'enon $C^2$-like $C^4$-family such that a non-degenerate homoclinic unfolding holds with a
  %periodic point $\Omega$ in a hyperbolic set $K$ at $a_1$.
%
  %Then there exist $C>0$, $M,\,N\in \mathbb N$ such that: for any periodic point $\Omega'$ close to $\Omega$, a
  %non-degenerate homoclinic unfolding holds with $\Omega'$ at $a=a_1'$ close to $a_1$.  Moreover, let $\sigma$ denote
  %the unstable eigenvalue of $D_{\Omega'} f_{a_1'}^{p'}$, where $p'$ denotes the period of $\Omega'$.  If $p'\ge M$,
  %there exists an interval $I_{\Omega'}$ of parameter at a distance in $[\sigma^{-1}/C, C \sigma^{-1}]$ of $a_1'$ and of
  %length in $[\sigma^{-2}/C,C \sigma^{-2}]$ such that for every $a\in I_{\Omega'}$, the map $f_a$ has an attracting
  %cycle of minimal period $p'+N$.
\end{lemm}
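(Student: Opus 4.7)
The plan is to extend the Newhouse--Palis--Takens renormalization scheme to a periodic saddle $\Omega'$ and compare the first-return map with the quadratic family $\tilde x \mapsto \tilde x^2 + \tilde a$. Sinks of period $p' + N$ are fixed points of $f^{p' + N} = f^N \circ g$, where $g := f^{p'}$ is the full-period map at $\Omega'$ (with multipliers $\sigma, \lambda$) and $f^N$ is the global transit from a homoclinic point $P' \in W^u_{\mathrm{loc}}(\Omega')$ to $Q' := f^N(P') \in W^s_{\mathrm{loc}}(\Omega')$. As a preliminary step, for every periodic $\Omega' \in K$ close to $\Omega$, one produces a non-degenerate homoclinic unfolding at $a_0'$ close to $a_0$: since $K$ is a basic set, $W^{s/u}(\Omega')$ is dense in $W^{s/u}(K)$, so the tangency at $(\Omega, a_0)$ perturbs to a quadratic tangency of leaves of $W^s(\Omega')$ with $W^u(\Omega')$ at $a_0'$, with non-degeneracy of the unfolding inherited from the common global transit $f^N$.

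Using Sternberg's theorem (afforded by the $C^4$ smoothness and generic non-resonance of the eigenvalues of $Dg|_{\Omega'}$), pick coordinates near $\Omega'$ in which $g(x, y) = (\sigma x, \lambda y)$, $P' = (\pi, 0)$ with $\pi$ uniformly bounded away from $0$ and $\infty$, and $Q' = (0, \eta)$. The transit map near $P'$ then reads
\[
P' + (x, y) \mapsto Q' + \bigl(\xi'\, x^2 + (a - a_0') + \gamma'\, y,\ \zeta'\, x\bigr) + \mathrm{h.o.t.},
\]
with $\xi', \gamma', \zeta'$ uniformly bounded away from $0$ and $\infty$. A fixed point $z_0$ of $f^{p' + N} = f^N \circ g$ sits at $z_0 \approx (\pi/\sigma, \eta)$: applying $g$ brings it near $P'$, and $f^N$ returns it near $Q' \approx z_0$; explicitly, the saddle-node parameter where this fixed point is created is $a = a_0' + \pi/\sigma - \gamma' \eta \lambda + O(\sigma^{-2})$. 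Rescaling $x_0 = \pi/\sigma + (\xi' \sigma^2)^{-1} \tilde x$, $y_0 = \eta + \zeta' (\xi' \sigma)^{-1} \tilde y$ and reparametrizing $\tilde a = \xi' \sigma^2 \bigl(a - a_0' - \pi/\sigma + \gamma' \eta \lambda\bigr)$, the renormalized return map becomes $(\tilde x, \tilde y) \mapsto (\tilde x^2 + \tilde a + \gamma' \zeta' (\lambda \sigma) \tilde y + o(1),\ \tilde x + o(1))$; for $p' \ge M$ large the small-determinant hypothesis makes the residual Jacobian $\gamma' \zeta' \lambda \sigma$ arbitrarily small, so the map converges to the 1D quadratic.

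Standard analysis: $\tilde x \mapsto \tilde x^2 + \tilde a$ has an attracting fixed point exactly for $\tilde a \in (-3/4, 1/4)$. Pulling this interval back via the reparametrization yields $|I_{\Omega'}| \asymp 1/(\xi' \sigma^2) \asymp \sigma^{-2}$, centered near $a_0' + \pi/\sigma - \gamma' \eta \lambda$, hence at distance $\asymp |\pi|/\sigma \asymp \sigma^{-1}$ from $a_0'$ in the strongly dissipative regime $|\lambda| = o(\sigma^{-1})$. The sink's minimal period is $p' + N$ since its orbit comprises $p'$ points near the orbit of $\Omega'$ together with $N$ geometrically distinct points in the transit region. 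The main technical obstacle is propagating uniform constants $C$ across all $\Omega' \in K$ with $p' \ge M$: a uniform $C^2$ Sternberg linearization of $g$ (relying on $C^4$ smoothness and generic non-resonance of eigenvalues along $K$) and uniform positivity and boundedness of $\xi', \gamma', \zeta', \pi$ (inherited by continuity from $\Omega$ through the hyperbolic continuation of $K$) are both needed, and the small-determinant hypothesis is essential for separating the two scales $\sigma^{-1}$ (distance) and $\sigma^{-2}$ (length) by ruling out accidental cancellation between $\pi/\sigma$ and $\gamma' \eta \lambda$ in the parameter reparametrization.
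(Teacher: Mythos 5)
This paper does not prove Lemma~\ref{MisuRen}: it is cited verbatim as~\cite[Lemma 3.2]{MisuRen}, so there is no in-paper proof to compare against. On its own merits, your proposal follows the classical Palis--Takens renormalization scheme and gets all the scalings right: the quadratic window $\tilde a\in(-3/4,1/4)$, the interval length $\asymp(\xi'\sigma^2)^{-1}\asymp\sigma^{-2}$, the offset $\pi/\sigma-\gamma'\eta\lambda$ giving distance $\asymp\sigma^{-1}$, the residual Jacobian $\gamma'\zeta'\lambda\sigma\to0$, and the role of the small-determinant hypothesis in preventing the two terms $\pi/\sigma$ and $\gamma'\eta\lambda$ from accidentally cancelling (indeed $|\lambda\sigma|\lesssim|\det D_{\Omega'}f^{p'}|\sim b^{p'}\to0$ forces $|\lambda|\ll\sigma^{-1}$ for $p'\geq M$). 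The preliminary step --- transferring the tangency from $\Omega$ to a nearby periodic $\Omega'$ and its non-degenerate unfolding --- via continuity of hyperbolic continuations plus density of $W^{s/u}(\Omega')$ in $W^{s/u}(K)$ is also correct.

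The genuine gap is the uniformity of $C$ across all periodic $\Omega'$ with $p'\geq M$, which you flag as ``the main technical obstacle'' but do not close. Invoking Sternberg to $C^2$-linearize $g=f^{p'}$ at each $\Omega'$ does not by itself yield chart bounds uniform in $p'$ (equivalently, uniform as the stable eigenvalue $\lambda\to0$ and the resonance structure degenerates), and the ``generic non-resonance'' you assume is not among the lemma's hypotheses, which only require $C^4$ regularity. This is precisely the kind of uniformity the companion paper is designed to address: see the remark in Appendix~A.2 of the present paper that the adapted $C^2$-coordinates from~\cite{MisuRen} have second-order derivatives ``bounded independently of $f$\dots even if the determinant of $Df$ is zero''. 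To complete your argument you would need either a uniform $C^{1+\alpha}$ linearization along orbits in $K$ exploiting bounded distortion (which weakens the needed regularity of the chart but then requires re-examining whether $C^{1+\alpha}$ suffices for the quadratic normal form and the period-count), or to replace Sternberg with an explicit construction of hyperbolic adapted coordinates with uniform bounds as in~\cite{MisuRen}. As written, the claim that a single constant $C$ serves all $p'\geq M$ remains unjustified.
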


The above lemma provides, in our general setting, the parameter space estimate which has been essential in most previous
results; for instance, in~\cite{TLY,Wang90} this estimate follows from the linearity of the horseshoe; in our case it is
obtained via renormalization techniques.  We can employ it, \emph{en passant}, to recover a version of
Tedeschini-Lalli--Yorke and Wang results applicable to our broader setting.
\begin{coro}\label{c_upperBound}
  Given a horseshoe $K$ for a H\'enon $C^4$-like family, and a non-degenerate unfolding of a homoclinic tangency, the set $\mathcal N_1(K)$ of
  parameters with infinitely many attracting simple\footnote{ For our purposes we can define a cycle to be \emph{simple}
    (with respect to a given homoclinic tangency) if it appears as a consequence of the Newhouse construction, of which
    Lemma~\ref{MisuRen} is a refinement.  In particular, there exist a neighborhood of the tangency point which contains
    only one element of every simple cycle.} sinks has Hausdorff dimension at most $1/2$.  In particular, it is a null set
  for Lebesgue measure.
\end{coro}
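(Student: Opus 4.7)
The plan is to use Lemma~\ref{MisuRen} to cover $\mathcal N_1(K)$ efficiently by the stability intervals $I_{\Omega'}$ indexed by the periodic points $\Omega'\in K$, and then to bound the total $s$-volume of the cover via a standard topological pressure estimate on the horseshoe $K$.

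First I would observe that, by the very definition of a simple sink (and the footnote accompanying the statement), every simple sink is produced by the Newhouse mechanism applied to some periodic point $\Omega'\in K$ of period $p\ge M$, and exists as a sink of $f_a$ precisely when $a\in I_{\Omega'}$. Since $K$ contains only finitely many periodic points of any given period, a parameter $a\in\mathcal N_1(K)$ must then belong to $I_{\Omega'}$ for periodic points $\Omega'$ of arbitrarily large period, giving the limsup inclusion
\[
\mathcal N_1(K)\;\subseteq\;\bigcap_{n_0\ge M}\;\bigcup_{p\ge n_0}\;\bigcup_{\Omega'\in \textup{Per}_p(f|_K)}I_{\Omega'}.
\]
By the length estimate of Lemma~\ref{MisuRen}, $|I_{\Omega'}|\le C\sigma_{\Omega'}^{-2}$, where $\sigma_{\Omega'}$ denotes the unstable eigenvalue of $D_{\Omega'}f^{p}$; the intervals have diameter $\le C\Lambda^{-2p}\to 0$ as $p\to\infty$, so they form an admissible family for the Hausdorff $s$-measure. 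Consequently, to conclude $\HD \mathcal N_1(K)\le s$ for every $s>1/2$, it suffices to show that the tail (in $n_0$) of the double sum $\sum_{p\ge n_0}\sum_{\Omega'}\sigma_{\Omega'}^{-2s}$ tends to zero.

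The crucial input is the pressure estimate. For the basic set $K$ with potential $\phi(x)=\log|D_xf|_{E^u(x)}|$, classical horseshoe thermodynamics (via a Markov partition, \`a la Bowen) yields, for any $\epsilon>0$,
\[
\sum_{\Omega'\in\textup{Per}_p(f|_K)}\sigma_{\Omega'}^{-2s}\;\le\; C_\epsilon\, e^{p(P(-2s\phi)+\epsilon)}.
\]
By the variational principle combined with Ruelle's inequality $h_\mu\le\chi_\mu:=\int\phi\,d\mu$, valid for every $f$-invariant probability $\mu$ supported on $K$, one obtains
\[
P(-2s\phi)\;=\;\sup_\mu \bigl(h_\mu-2s\chi_\mu\bigr)\;\le\;(1-2s)\inf_\mu \chi_\mu,
\]
which is strictly negative whenever $s>1/2$, since $\inf_\mu\chi_\mu\ge\log\Lambda>0$ by item (i) of Theorem~\ref{Newhouserev} (or, more elementarily, by uniform hyperbolicity of $K$). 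The double sum therefore converges and its tail vanishes, so that $\mathcal H^s(\mathcal N_1(K))=0$ for every $s>1/2$, whence $\HD \mathcal N_1(K)\le 1/2$; the Lebesgue-null statement is immediate since $1/2<1$.

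The main technical point is formulating the pressure/periodic-orbit counting bound cleanly for our (non-symbolic) horseshoe — this is classical via Markov partitions but requires some care in the constants — and verifying that Ruelle's inequality together with the uniform lower bound on the unstable Lyapunov exponent really pushes $P(-2s\phi)$ strictly below zero for every $s>1/2$, without any additional hypothesis on the fractal geometry of $K$.
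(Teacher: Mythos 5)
Your proof is correct, but it takes a genuinely different and in fact cleaner route than the paper's. The paper avoids invoking the thermodynamic formalism explicitly: it discretizes the horseshoe $K$ into finitely many sub-horseshoes $K_i$ according to which band $[i\epsilon,(i+1)\epsilon)$ the mean expansion of a periodic orbit falls into, applies Ruelle's inequality separately on each $K_i$ to get the entropy bound $h_i\le\log((i+1)\epsilon)$, counts periodic points by $\exp(n h_i)$, and then makes the resulting convergence exponent $s$ tend to $1/2$ by shrinking the band width $\epsilon$. Your version replaces this discretization and the final $\epsilon\to 0$ limit by a single application of the variational principle: the bound $P(-2s\phi)\le(1-2s)\inf_\mu\chi_\mu<0$ for $s>1/2$ immediately yields geometric decay of $\sum_{\Omega'\in\Per_p}\sigma_{\Omega'}^{-2s}$, which packages the same Ruelle-inequality input but with no auxiliary parameter to optimize. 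The trade-off is that you invoke Bowen's periodic orbit counting at the level of pressure (which the paper only uses implicitly through the topological entropy bound), so your argument is more conceptual but relies on a slightly heavier classical result. One small remark: your appeal to item (i) of Theorem~\ref{Newhouserev} for the lower bound $\inf_\mu\chi_\mu\ge\log\Lambda>0$ is not quite apposite, since that theorem concerns the specific wild basic set $\tilde K$ constructed there, whereas the corollary is stated for an arbitrary horseshoe $K$; your fallback observation that uniform hyperbolicity of $K$ already gives $\inf_\mu\chi_\mu>0$ is what actually does the work and should be the primary justification.
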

\begin{proof}
  For $\epsilon>0$ small and $i\in\mathbb N$, define the intervals $I_i:= [i\epsilon, (i+1)\epsilon)$.  Let $\Per_i$ be
  the set of periodic orbits of $K$ whose mean expansion $\sigma\in I_i$ and let $K_i$ be the closure of $\Per_i$;
  clearly $K_i$ is a (hyperbolic) basic set.  We claim that $\mathcal N_1(K)$ is the union $\mathcal N_1(K)=\bigcup_i
  \mathcal N_1(K_i)$: in fact every $f\in \mathcal N_1(K)$ has infinitely many simple sinks.  Each such sink is 1-1
  associated with an integer $n$ and a fixed point of $f^n$.  By compactness, only finitely many $K_i$ are nonempty,
  thus for every $ f\in \mathcal N_1(K)$, there exists $i$ so that there are infinitely many sinks associated to
  periodic saddles in $K_i$, which implies that $f\in\mathcal N_1(K_i)$.

  Let $h_i$ be the topological entropy of $K_i$, by Ruelle inequality it holds 
	%
	%let $d_i$ be its stable Hausdorff dimension. we have by the entropy
  %formulas:
  \[
  h_i\le \sup \log \sigma\le \log ((i+1)\epsilon).
  \]
  On the other hand, the number of points in $\Per_i$ fixed by $f^n$ is at most $\exp(n h_i)\le ((i+1)\epsilon)^n$.
  Given $p\in\fix f^n|K_i$ of period $n$, Lemma~\ref{MisuRen} implies that the attracting cycle associated to it exists
  for an interval of parameters $I_p$ of length $\Cst\, \sigma^{-2n}\le \Cst (i\epsilon)^{-2n}$.

  The family $(I_p)_{p\in\fix f^n, n\ge N}$ is a covering of $\mathcal N_1(K_i)$ for every $N$. We notice that
  \[
  \sum_{p\in\fix f^n, n\ge N} |I_p|^s \le \Cst \sum_{n\ge N}
  (i\epsilon)^{-2ns} ((i+1)\epsilon)^n.
  \]
  The above series converges if $(i\epsilon)^{-2s}(i+1)\epsilon<1$, thus the Hausdorff dimension of $\mathcal N_1(K_i)$
  is at most $s$.  The above condition on $s$ can be made arbitrarily close to $s=1/2$ by taking $\epsilon$ small.
\end{proof}
Let us go back to the proof of our main result.  We recall that, by Theorem~\ref{Newhouserev}, for every $a_1\in I\ni
a_0$, the map $f_{a_1}$ has a wild basic set $\tilde K(a_1)$, which is the hyperbolic continuation of $\tilde K(a_0)$.
Also, there exists $L$, such that for every $a_1\in I$, there exists $\Omega_{a_1}({a_1})\in \tilde K({a_1})$ close to
$\Omega({a_1})$ so that $W^s_L(\Omega_{a_1}({a_1}), f_{a_1})$ is tangent to $W^u_L(\Omega_{a_1}({a_1}), f_{a_1})$, via a
quadratic tangency, unfolded nondegenerately by $(f_a)_a$.

Since periodic points are dense in $\tilde K$, there exists, for any $a_1$, a periodic point $\Omega'_{a_1}(a_1)$ nearby
$\Omega_{a_1}(a_1)$ and a parameter $a_1'$ nearby $a_1$, so that $W^s_L(\Omega'_{a_1}(a_1'), f_{a_1'})$ and
$W^u_L(\Omega'_{a_1}(a_1'), f_{a_1'})$ have a quadratic tangency, unfolded nondegenerately by $(f_a)_a$.

We will bound the distance between $a_1'$ and $a_1$ as a function of the unstable eigenvalue of
$D_{\Omega'_{a_1}(a'_1)} f_{a_1'}^{p}$, with $p$ being the period of $\Omega'_{a_1}$, in view of Theorem
\ref{Newhouserev}.  Since the unfolding is nondegenerate, the distance between $a_1'$ and $a_1$ is of the same order as
the distance between $\Omega_{a_1}(a_1)$ and $\Omega'_{a_1}(a_1)$.  Furthermore, by a standard computation, there exists
$C>0$, such that for all such $a_1$, $a_1'$, $\Omega_{a_1}$, $\Omega'_{a_1}$, and $n\ge 0$ we have:
\begin{align}\label{e_estimatea1a1'}
  \frac{d(a_1,a_1')}C < d(\Omega_{a_1}(a_1),\Omega'_{a_1}(a_1))< C d(a_1,a'_1)\\
  \frac nC<\frac{\partial_a \|D_{\Omega'_{a_1}} f_{a}^{n}\|}{\|D_{\Omega'_{a_1}} f_{a}^{n}\|}<Cn,\quad \forall a\in I.\notag
\end{align}

By the construction of a Markov partition by Bowen~\cite{Bo70}, there exist $N\ge 1$, a matrix $A\in M_n(\{0,1\})$ such that
$\tilde K$ is homeomorphic to $\Sigma_A: = \{(j_i)_i\in \{1,\dots , N\}^\Z: \; A_{j_ij_{i+1}}=1,\; \forall i\in \Z\}$,
via a homeomorphism $h$ which conjugates $f|\tilde K$ to the shift $\sigma\colon \Sigma_A\ni (j_i)_i \mapsto
(j_{i+1})_i\in \Sigma_A$.

Moreover, given any $\epsilon>0$, we can find $h$, $N$ and $A$ so that, for every $j\in\{1,\dots, N\}$, the set
$R_j\subset \tilde K$ of points $z$ such that the $0$-coordinate of $h(z)$ is equal to $j$ satisfies that the diameter
of $R_j$ is less than $\epsilon$.  We notice that $R_j$ is a clopen set of $\tilde K$.

An \emph{admissible chain} from $i$ to $j$ of length $m$ is a sequence $i=j_1,\dots,j_{m}=j$, such that $A_{j_kj_{k+1}}=1$
for every $k$.
\begin{Fact}\label{fait}
  By transitivity of $\tilde K$, there exists $M_T\ge 0$ such that for all $i,j\in\{1,\cdots,N\}$, there exists an
  admissible chain from $i$ to $j$ of length $m\le M_T$.
\end{Fact}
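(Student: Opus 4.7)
The plan is to translate the topological transitivity of $\tilde K$ into a combinatorial property of the transition matrix $A$, and then exploit the finiteness of the alphabet to obtain a uniform bound. Since $h$ conjugates $f|\tilde K$ with the shift $\sigma$ on $\Sigma_A$, transitivity of $\tilde K$ is equivalent to topological transitivity of $(\Sigma_A,\sigma)$, which is a standard property amounting to strong connectivity of the directed graph $G_A$ with vertex set $\{1,\dots,N\}$ and edge $i\to j$ whenever $A_{ij}=1$. This strong connectivity is exactly the statement that for each ordered pair $(i,j)$ there exists \emph{some} admissible chain from $i$ to $j$; the theorem then follows by taking a maximum over a finite set.

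More concretely, first I would verify that for every $(i,j)$ there exists at least one admissible chain from $i$ to $j$. Fix $i,j$ and consider the clopen cylinders $[i]:=\{(j_k)_k\in\Sigma_A:j_0=i\}$ and $[j]$, both of which are nonempty since $i,j$ appear in the alphabet of a nontrivial subshift indexing $\tilde K\neq\emptyset$. By topological transitivity of $\sigma$, there exists $m\ge 0$ such that $\sigma^{-m}([j])\cap [i]\neq\emptyset$; any sequence in this intersection gives indices $j_0=i, j_1,\dots, j_{m}=j$ with $A_{j_kj_{k+1}}=1$ for each $k$, i.e.\ an admissible chain from $i$ to $j$ of length $m$.

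Hence for every $(i,j)\in\{1,\dots,N\}^2$ the quantity
\[
\ell(i,j):=\min\{m\ge 0:\ \exists\ \text{admissible chain of length }m\ \text{from }i\ \text{to }j\}
\]
is well-defined and finite. Setting
\[
M_T:=\max_{(i,j)\in\{1,\dots,N\}^2}\ell(i,j),
\]
which is a maximum over a finite set and therefore finite, yields the desired uniform bound.

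I do not expect any significant obstacle here: the only point requiring some care is the equivalence between transitivity of $\tilde K$ and the existence of an admissible chain between any pair of indices, but this follows immediately from the fact that the Markov rectangles $R_j$ are clopen and nonempty, so that cylinders in $\Sigma_A$ correspond bijectively to them via $h$. The finiteness of the alphabet, which is built into the Markov partition furnished by Bowen's construction, is what converts the pointwise existence of chains into a uniform bound $M_T$.
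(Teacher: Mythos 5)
Your proof is correct and is exactly the standard argument: the paper states this Fact without proof, taking for granted that transitivity of the subshift plus finiteness of the alphabet yield a uniform bound on connecting-chain length. The one nit is an off-by-one in terminology: under the paper's convention a chain $i=j_1,\dots,j_m=j$ has length $m$ (so $m-1$ transitions), whereas your sequence $j_0=i,\dots,j_m=j$ has $m+1$ entries and would be of length $m+1$ in the paper's sense; this is immaterial to the argument, since all that matters is that each $\ell(i,j)$ is finite and the maximum over the finite index set $\{1,\dots,N\}^2$ is finite.
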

For $n<m\in \Z$,  an admissible chain  $\underline j := (j_i)_{i=n}^m\in \{1,\dots , N\}^{m-n+1}$, define the sets:
\[R_{\underline j}:= \bigcap_{i=n}^m f^i(R_{j_i})\] Let $z\in \tilde K$ and set for $n\ge 0$ (we omit the obvious
dependence on $a$):
 \begin{align*}
   \lambda_n(z)&:= \|D_{z}f^n|E^s\|&
   \sigma_{n}(z)&:= \|D_{z}f^n|E^u\|,\\
   \lambda_{-n}(z)&:= \|D_{f^{-n}(z)}f^{n}|E^s\|&
   \sigma_{-n}(z)&:= \|D_{f^{-n}(z)}f^n|E^u\|.
\end{align*}
A classical argument of binding proves the following
\begin{lemm}
  There exists $C>0$, such that for all $m\le 0\le n$, every admissible chain $\underline j := (j_i)_{i=m}^n$, and all
  $z,z'\in R_{\underline j}$, we have
  \[\lambda_{n}(z)\le C \lambda_{n}(z') \quad \text{and}\quad \sigma_{n}(z)\le C \sigma_{n}(z').\]
  \[\lambda_{m}(z)\le C \lambda_{m}(z') \quad \text{and}\quad \sigma_{m}(z)\le C \sigma_{m}(z').\]
\end{lemm}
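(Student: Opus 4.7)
The plan is to prove a standard bounded-distortion estimate for the basic set $\tilde K$; the two inequalities for $\sigma_n$ and $\lambda_n$ follow by entirely analogous arguments (with $f$ replaced by $f^{-1}$ for the case of negative indices $m$), so I would focus on bounding $\sigma_{n}(z)/\sigma_{n}(z')$ for $n\ge 0$, $z,z'\in R_{\underline j}$.

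First I would telescope by the chain rule, writing
\[
\log\frac{\sigma_n(z)}{\sigma_n(z')}=\sum_{i=0}^{n-1}\log\frac{\|D_{f^i(z)}f|E^u\|}{\|D_{f^i(z')}f|E^u\|}.
\]
Since $f$ is $C^2$ and the unstable distribution $E^u$ over $\tilde K$ is H\"older continuous (a standard fact for basic hyperbolic sets), the function $w\mapsto \log\|D_{w}f|E^u(w)\|$ is H\"older with some exponent $\alpha\in(0,1]$ and constant $C_0$. Thus each summand is bounded by $C_0\, d(f^i(z),f^i(z'))^{\alpha}$, and the task reduces to summing those distances.

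Next I would exploit the Markov structure together with hyperbolicity to control the orbit separation. Since $z,z'\in R_{\underline j}$ share the same symbolic coding for $m\le i\le n$, the local product structure lets me decompose $z'-z$ (in adapted coordinates) into a stable component $\Delta^s$ and an unstable component $\Delta^u$, both of size at most $\Cst\, \epsilon$ (the diameter of the rectangles). The constraint $f^n(z),f^n(z')\in R_{j_n}$ forces $|D_zf^n\Delta^u|\le \Cst\,\epsilon$, and the uniform expansion $\Lambda>1$ (provided by Theorem~\ref{Newhouserev}(i)) then gives $|\Delta^u|\le \Cst\,\epsilon\Lambda^{-n}$. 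Propagating forward, for $0\le i\le n-1$ one obtains
\[
d(f^i(z),f^i(z'))\le \Cst\,\epsilon\,\bigl(\lambda^{i}+\Lambda^{-(n-i)}\bigr),
\]
where $\lambda<1$ is the contraction rate on $E^s$ (symmetrically, the constraint coming from the coding at times $i\le 0$ would involve $\Lambda^{-|m-i|}$, but for $n\ge i\ge 0$ the first term already suffices).

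Plugging this into the distortion sum yields a bound by two convergent geometric series, independent of $n$, and hence $|\log(\sigma_n(z)/\sigma_n(z'))|$ is uniformly bounded; exponentiating produces the constant $C$ of the lemma. The argument for $\lambda_n$ is identical, using that $z\mapsto\log\|D_zf|E^s\|$ is equally H\"older, while the statements for negative $m$ follow by applying the same scheme to $f^{-1}$, exchanging the r\^oles of stable and unstable. The only point requiring mild care is that all the implicit constants (the Markov diameter $\epsilon$, the H\"older constant $C_0$, the rates $\lambda,\Lambda$) must be taken uniform in $a\in I$, which is ensured by the uniformity of the hyperbolic continuation $\tilde K(a)$ and by Theorem~\ref{Newhouserev}(i); this is the only mild obstacle, and it is resolved automatically since $I$ was chosen so that $\tilde K(a)$ is a continuous hyperbolic continuation of $\tilde K(a_0)$ with uniform bounds.
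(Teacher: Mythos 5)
The paper gives no detailed proof of this lemma; it simply invokes ``a classical argument of binding.'' Your argument spells out exactly this classical bounded-distortion estimate — telescoping, H\"older continuity of $\log\|Df|E^u\|$ over the hyperbolic set, and the exponential closing bound $d(f^i(z),f^i(z'))\le\Cst\,\epsilon\,(\lambda^i+\Lambda^{-(n-i)})$ for points sharing a long symbolic block — and it is correct and in line with the paper's intent.
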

\begin{coro}\label{diametre}
  There exists $C>0$, such that such that for all $m\le 0\le n$, any admissible chain $\underline j := (j_i)_{i=n}^m$,
  the diameter of $R_{\underline j}$ is less than $C\max(\sigma_n^{-1}(z), \lambda_{m}(z))$ for any $z\in R_{\underline
    j}$.
\end{coro}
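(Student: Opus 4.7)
The plan is to combine the local product structure of the basic hyperbolic set $\tilde K$ with the binding lemma just established, controlling the unstable and stable separations independently via forward and backward iteration.

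Fix $z,z'\in R_{\underline j}$. If the Markov partition has been constructed with $\epsilon$ smaller than the size of local product structure of $\tilde K$, the bracket
\[
w:=[z,z']=W^s_{\epsilon}(z)\cap W^u_{\epsilon}(z')
\]
is well-defined and lies in $\tilde K$ (basic sets are locally maximal). By the triangle inequality and uniform transversality of $E^s$ and $E^u$, one has $d(z,z')\le C\bigl(d^s(z,w)+d^u(w,z')\bigr)$, where $d^s,d^u$ denote distances along the corresponding local leaves. It therefore suffices to bound $d^u(w,z')$ by $C\sigma_n^{-1}(z)$ and $d^s(z,w)$ by $C\lambda_m(z)$.

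For the unstable bound, note that $f^n(w),f^n(z')\in f^n(R_{\underline j})\subset R_{j_n}$, so their distance is at most $\epsilon$. Since $w,z'$ lie on a common local unstable leaf, the mean value theorem applied to $f^n$ along that leaf gives $d^u(f^n(w),f^n(z'))\ge C^{-1}\sigma_n(\xi)\, d^u(w,z')$ for some $\xi$ on the unstable arc between $w$ and $z'$; the binding lemma ensures $\sigma_n(\xi)$ is comparable to $\sigma_n(z)$ up to a universal constant, hence $d^u(w,z')\le C\epsilon\,\sigma_n^{-1}(z)$. The stable bound is symmetric, using backward iteration: $f^m(z), f^m(w)\in R_{j_m}$ are at distance at most $\epsilon$, and since $z,w$ sit on a common local stable leaf, the contraction rate of the forward iterate $f^{-m}$ on stable vectors along that leaf is precisely $\lambda_m(\cdot)$. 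Applying the binding lemma once more to substitute $z$ for an intermediate point yields $d^s(z,w)\le C\epsilon\,\lambda_m(z)$.

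Combining the two estimates produces $d(z,z')\le C\epsilon\max\bigl(\sigma_n^{-1}(z),\lambda_m(z)\bigr)$, and absorbing $\epsilon$ into the constant gives the statement. The only delicate point is the invocation of the binding lemma to transfer derivative norms from intermediate points on the stable/unstable arcs to the reference point $z$; this is precisely the content of the preceding lemma, so no substantial further obstacle is expected, the remainder being the standard hyperbolic distortion argument.
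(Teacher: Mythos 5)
Your proof is correct and is essentially the argument the paper leaves to the reader (the Corollary is stated without proof, as an immediate consequence of the preceding binding lemma): decompose $d(z,z')$ via the local product bracket, push $n$ steps forward and $|m|$ steps backward to exploit the small diameter of $R_{j_n}$ and $R_{j_m}$, and compare derivative norms at the relevant points. One cosmetic imprecision: the mean-value point $\xi$ on the unstable arc lies on a local unstable leaf but generally not in $\tilde K$, so the comparison of $\sigma_n(\xi)$ with $\sigma_n(z)$ is not literally an application of the preceding lemma; it uses the standard $C^2$ bounded-distortion estimate along local invariant manifolds, which is the same computation that proves that lemma, so nothing substantive is missing.
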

\begin{prop}
  There exists $C>0$, such that, for any $a_1\in I$ and $n\ge 0$, with $n'=n'(n)$ such that
  \[
  \lambda_{-n'-1}(\Omega_{a_1}(a_1)) \le \sigma_n^{-1}(\Omega_{a_1}(a_1))\le \lambda_{-n'}(\Omega_{a_1}(a_1)),
  \]
  let $\underline j := (j_i)_{i=-n'}^n\in \{1,\dots , N\}^{n+n'+1}$, such that $\Omega_{a_1}$ belongs to $R_{\underline
    j}$.  Then at $a=a_1$ and $z=\Omega_{a_1}$
  \[
  (\sigma_n \sigma_{-n'} )\le C(\sigma_n)^{1-C/\log b}.
  \]
\end{prop}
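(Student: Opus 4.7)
My plan is to exploit two structural features of $\tilde K(a_1)$ afforded by Theorem~\ref{Newhouserev}: the uniform single-step unstable expansion $\sigma_1(z)\ge\Lambda$, and the H\'enon-like structure, which forces the Jacobian to be identically $b$ up to a small perturbation. Together with the uniform transversality of the invariant splitting $E^s\oplus E^u$ on the compact hyperbolic set $\tilde K(a_1)$ (uniform in $a_1\in I$ by standard continuation of the splitting from $\tilde K(a_0)$), the Jacobian identity yields the distortion estimate
\[
\lambda_{-m}(z)\,\sigma_{-m}(z)\,\asymp\,b^m
\]
uniformly in $z\in\tilde K(a_1)$ and $m\ge 0$; specializing to $m=1$ and using $\sigma_1\le\bar\sigma_{\max}:=\sup_{a_1,z}\sigma_1<\infty$ one also obtains the pointwise double bound $c\,|b|/\bar\sigma_{\max}\le\lambda_1(z)\le C\,|b|/\Lambda$.

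First I apply the displayed identity at $m=n'$ and $z=\Omega_{a_1}(a_1)$; combined with the hypothesis $\sigma_n^{-1}\le\lambda_{-n'}$ this gives
\[
\sigma_{-n'}(\Omega_{a_1})\,\le\,C\,\frac{b^{n'}}{\lambda_{-n'}(\Omega_{a_1})}\,\le\,C\,b^{n'}\sigma_n,
\]
so that $\sigma_n\sigma_{-n'}\le C\,b^{n'}\sigma_n^2$.  Next I use the second hypothesis $\lambda_{-n'-1}(\Omega_{a_1})\le\sigma_n^{-1}$ to bound $n'$ from below: iterating the pointwise lower bound on $\lambda_1$ yields $\lambda_{-n'-1}(\Omega_{a_1})\ge(c|b|/\bar\sigma_{\max})^{n'+1}$, whence after taking logarithms
\[
n'+1\,\ge\,\frac{\log\sigma_n}{|\log b|+\log(\bar\sigma_{\max}/c)}.
\]
Substituting this into $\log(\sigma_n\sigma_{-n'})\le\log C+n'\log b+2\log\sigma_n$ and using $n'\log b=-n'|\log b|$, the right hand side is bounded by
\[
\log\sigma_n\left(1+\frac{\log(\bar\sigma_{\max}/c)}{|\log b|+\log(\bar\sigma_{\max}/c)}\right)+\Cst,
\]
and replacing the fraction by its uniform upper bound $\log(\bar\sigma_{\max}/c)/|\log b|$ and calling this latter quantity $-C/\log b$ after absorbing additive constants into $\log C$ yields the claimed inequality upon exponentiating.

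The main obstacle is ensuring that all implicit constants (those in the distortion identity $\lambda_{-m}\sigma_{-m}\asymp b^m$, in the single-step bounds on $\lambda_1$ and $\sigma_1$, and in the uniform lower bound on the angle between $E^s$ and $E^u$) are genuinely independent of $a_1\in I$; this follows from compactness of $\tilde K(a_0)$, continuity of the hyperbolic continuation $a_1\mapsto\tilde K(a_1)$ on the compact interval $I$, and $C^1$-regularity of the family. Beyond this bookkeeping the computation is a careful balance, in the strongly dissipative regime, between the forward unstable expansion $\sigma_n$ and the backward unstable expansion $\sigma_{-n'}$ forced by the constraint that $\lambda_{-n'}\asymp\sigma_n^{-1}$.
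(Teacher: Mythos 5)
Your proof is correct, and it takes a genuinely different route from the paper's. The paper uses \emph{only} the inequality $\sigma_n^{-1}\le\lambda_{-n'}$ and the one-sided Jacobian bound $\sigma_n\lambda_n\le Cb^{|n|}$: combining these with $\sigma_n\le 5^{n}$ and $\sigma_n\ge C\Lambda^n$, it deduces the \emph{upper} bound $n'\le\tfrac{n\log 5+\log C}{-\log b}$, then bounds $\sigma_{-n'}\le 5^{n'}$ and converts $n$ to $\log\sigma_n$ via item~(i). You instead use \emph{both} defining inequalities for $n'$ and a \emph{two-sided} Jacobian estimate $\lambda_{-m}\sigma_{-m}\asymp b^m$: the first inequality and the Jacobian upper bound reduce the problem to controlling $b^{n'}\sigma_n^2$, while the second inequality and the Jacobian lower bound give a \emph{lower} bound on $n'$ directly in terms of $\log\sigma_n$. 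Your lower bound on the Jacobian is legitimate here because for a H\'enon $C^r$-$\delta$-like map $\det Df_a = b\bigl[(2x+\partial_x A)\partial_y B-(1+\partial_y A)(1+\partial_x B)\bigr]=b(-1+O(\delta))$ on the bounded domain of interest, so $|\det Df_a|\asymp b$ uniformly. A noteworthy consequence of your route is that it bypasses the lower expansion bound $\sigma_1\ge\Lambda$ of Theorem~\ref{Newhouserev}(i) entirely (only $\sigma_1\le 5$ enters), and your exponent constant $\log(\bar\sigma_{\max}/c)$ is potentially sharper than the paper's $(\log 5)^2/\log\Lambda$, since $\Lambda$ is only guaranteed to exceed $1$ by a margin close to $\sqrt[N]{2}$. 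Both proofs yield exponent constants independent of $b$, which is the crucial point for the subsequent Hausdorff dimension estimate. The only bookkeeping caveat, as you note, is verifying uniformity over $a_1\in I$; this does indeed follow from compactness of $\tilde K(a_0)$ and continuity of the hyperbolic continuation.
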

\begin{proof}
  For ease of exposition, although with abuse of notation, let us denote $\lambda_{-n'}(\Omega_{a_1}(a_1))$ by
  $\lambda_{-n'}$, $\sigma_{-n'}(\Omega_{a_1}(a_1))$ by $\sigma_{-n'}$, and so on.  However we keep in mind the
  dependence in $a_1$ and $z$.

  Since the determinant of $f$ is smaller than $b$, there exists $C>0$ such that for all $a_1\in I$ and $n\in \Z$:
  \begin{equation*}
    \sigma_{n}\lambda_{n}\le Cb^{|n|}\; .
  \end{equation*}
  Moreover, since the norm of $Df$ is bounded by $5$ on the complement of the basin of infinity, and by $(i)$ of
  Theorem~\ref{Newhouserev}, there exists $C>0$ such that for all $a_1\in I$ and $n\in \Z$:
  \begin{equation*}
    5^{|n|}\ge \sigma_{n}\ge C\Lambda^{|n|}.
  \end{equation*}
  Hence there exists $C>0$ such that for all $a_1\in I$ and $n\ge 0$:
  \begin{equation*}
    \lambda_{-n'}\le C(b/\Lambda)^{n'}\le C b^{n'}.
  \end{equation*}
  But $ 5^{-n}\le \sigma_{n}^{-1}\le \lambda_{-n'}$.
  Thus
  \begin{equation*}
    5^{-n}\le Cb^{n'}\Rightarrow {n'}\le \frac{n\log 5+\log C}{-\log b}\; .
  \end{equation*}
  Consequently:
  \begin{equation*}
    \sigma_{n'} \le C 5^{\frac{\log 5}{-\log b}n} \le
    C \Lambda^{-\frac{(\log 5)^2}{\log \Lambda \log b }n}\le C\sigma_n^{-\frac{(\log 5)^2}{\log \Lambda \log b }}. \qedhere
  \end{equation*}
\end{proof}
By Corollary~\ref{diametre}, if $\Omega_{a_1}$ belongs to $R_{\underline j}$, with $\underline j(n) := (j_i)_{i=-n'}^n$,
then the diameter of $R_{\underline j(n)}$ is less than $C\sigma_n^{-1}$.

By Fact~\ref{fait}, there exists a periodic point $\Omega_{a_1}^{(n)}$ in $R_{\underline j(n)}$ of period $p(n)\in
[n+n', n+n'+M_T]$.  By~\eqref{e_estimatea1a1'} and Lemma~\ref{MisuRen} we know that $f_a$ will nondegenerately unfold a
homoclinic tangency for $\Omega_{a_1}^{(n)}(a)$ at $a=a_1^{(n)}$ that is $C\sigma_n^{-1}$-close to $a_1$. Indeed:
\[d(a_1,a_1^{(n)})\le C d(\Omega_{a_1}(a_1),\Omega_{a_1}^{(n)} (a_1))\le \sup_{a\in [a_1,a_1^{(n)}]}\frac C{ \sigma_{-n} (a)}\]
\[\le \frac C{\sigma_{-n}(a_1)}+C  d(a_1,a_1^{(n)}) \sup_{[a_1,a_1^{(n)}]} \partial_a \frac1{\sigma_{-n}(a)}=\frac C{\sigma_{-n}(a_1)} +o(d(a_1,a_1^{(n)}))\]

 Moreover,
again by Lemma~\ref{MisuRen}, if $n$ is large enough (e.g. $n\geq M$ certainly suffices), there exists an interval of
parameters $I_n$, at a distance dominated by $\sigma_n^{-1}$ of $a_1^{(n)}$ (and consequently of $a_1$), which is of
length proportional to $(\sigma_{-n'}\sigma_n)^{-2}\ge C(\sigma_n)^{-2+\frac{C}{\log(b)}}$, such that for every $a\in
I_n$, the map $f_a$ has an attracting periodic point of minimal period $p(n)+N$.  Let us summarize the content of this
subsection in the following
\begin{prop}\label{p_construction}
  There exist $\Lambda'>\Lambda>1$, $C,D>0$, an open interval $I$%\ni a_1$
  of parameters $a$, such that for every $a_1\in I$, there exist a sequence of intervals $(I_k(a_1))_k$ and a sequence
  of positive integers $(m_k)_k$ such that:
  \begin{itemize}
  \item $I_k(a_1)$ is at a distance at most $D|I_k|^{1/2 -C/\log b}$ of $a_1$,
  \item there exists $\bar M>0$ so that $m_{k+\bar M}>m_j$ for any $0<j<k$;
  \item $f_a$ has an attracting periodic point $p_k(a)$ of period $m_k$, for every $a\in I_k(a_1)$;
  \item $\Lambda |I_{k+1}(a_1)|< |I_{k}(a_1)|<\Lambda' |I_{k+1}(a_1)|$.
  \end{itemize}
\end{prop}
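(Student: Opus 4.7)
The plan is to construct, for each $a_1\in I$ and each sufficiently large integer $n$, a candidate interval $I_n(a_1)$ by chaining the tools of this section together, and then to check that the indexing by $n$ itself already delivers all the required properties without any non-trivial subsequence extraction. Fix $a_1\in I$; for each large $n$, let $n'=n'(n)$ be the matching backward time from the preceding proposition, and let $\underline{j}(n)=(j_i)_{i=-n'}^{n}$ be the admissible chain coding $\Omega_{a_1}(a_1)$. By Fact~\ref{fait}, the cylinder $R_{\underline j(n)}$ contains a periodic point $\Omega_{a_1}^{(n)}$ of minimal period $p(n)\in[n+n',n+n'+M_T]$. By Corollary~\ref{diametre} and the binding lemma, $\Omega_{a_1}^{(n)}(a_1)$ lies within $C\sigma_n^{-1}$ of $\Omega_{a_1}(a_1)$ and its unstable multiplier over one full period is comparable to $\sigma:=\sigma_n\sigma_{-n'}$; by \eqref{e_estimatea1a1'}, the non-degenerate homoclinic tangency of $\Omega_{a_1}^{(n)}$ is unfolded at a parameter $a_1^{(n)}$ with $|a_1-a_1^{(n)}|\le C\sigma_n^{-1}$. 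Applying Lemma~\ref{MisuRen} then produces, for $n$ larger than the integer $M$ appearing there, a parameter interval $I_n(a_1)$ of length $|I_n(a_1)|\asymp\sigma^{-2}$ and at distance $O(\sigma^{-1})$ from $a_1^{(n)}$, on which $f_a$ admits an attracting cycle of minimal period $m_n:=p(n)+N$.

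Next I would verify the four listed properties. The triangle inequality together with $\sigma\ge\sigma_n$ yields $d(I_n(a_1),a_1)\le C'\sigma_n^{-1}$; combining this with the cross-product bound $\sigma\le C\sigma_n^{1-C/\log b}$ from the preceding proposition, and hence $|I_n(a_1)|\ge c\sigma_n^{-2(1-C/\log b)}$, and after rebalancing the constants, yields the required estimate $d(I_n(a_1),a_1)\le D|I_n(a_1)|^{1/2-C/\log b}$. For the two-sided geometric decay, I would observe that the ratios $|I_n|/|I_{n+1}|$ are trapped in a bounded range $[\Lambda^{2-\eta},B^{2-\eta}]$, with $\Lambda>1$ from Theorem~\ref{Newhouserev}(i) and $B$ a local upper bound (such as $5$) on the unstable expansion; since $\Lambda^{2-\eta}>\Lambda$ for small $\eta$, setting $n_k:=n_0+k$ for a large enough $n_0$ already gives $\Lambda|I_{n_{k+1}}|<|I_{n_k}|<\Lambda'|I_{n_{k+1}}|$ with $\Lambda':=B^{2-\eta}$. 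Finally, since both $n\mapsto n$ and $n\mapsto n'(n)$ are non-decreasing (and $n'$ grows by a bounded amount per unit increase of $n$), the sequence $m_n\in[n+n'+N,n+n'+N+M_T]$ is non-decreasing up to a bounded slack of size at most $M_T$, so any $\bar M>M_T$ satisfies the required period condition $m_{k+\bar M}>m_j$ for every $0<j<k$.

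The main technical obstacle lies in the distance-to-length exponent: combining $|I_n(a_1)|\asymp\sigma^{-2}$ with the cross-product bound $\sigma\le C\sigma_n^{1-C/\log b}$ requires careful bookkeeping of how the additive corrections to the exponent interact, particularly in the regime of small $|b|$, so as to ensure that the final exponent stays of the form $1/2-C/\log b$ claimed in the proposition. Everything else in the proof is either borrowed directly from the preceding subsections or reduces to routine manipulations of geometric sequences.
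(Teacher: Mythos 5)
Your construction is essentially the same as the paper's: for each large $n$ you balance the backward time $n'(n)$ against $\sigma_n^{-1}$, locate a nearby periodic point $\Omega_{a_1}^{(n)}$ in the cylinder $R_{\underline j(n)}$ via Fact~\ref{fait}, compare its multiplier to $\sigma_n\sigma_{-n'}$ by the binding lemma, and invoke Lemma~\ref{MisuRen} to produce $I_n(a_1)$. This is precisely how the text preceding Proposition~\ref{p_construction} assembles the intervals, so on the structural level the proposal is sound. Two small but substantive issues remain. First, the exponent you actually extract is $\frac{1}{2(1-C/\log b)}$, \emph{not} $\frac12-\frac{C}{\log b}$; since $\log b<0$, the first is strictly less than $\frac12$ while the second is strictly greater, and no amount of ``rebalancing constants'' converts one into the other --- you flagged this as the delicate point but did not resolve it. Note, in fact, that Lemma~\ref{MisuRen} gives a \emph{two-sided} bound $d(I_{\Omega'},a_0')\asymp\sigma^{-1}\asymp|I_{\Omega'}|^{1/2}$, so the distance can never be $o(|I_n|^{1/2})$ and an exponent $>\tfrac12$ as literally written in the statement is unattainable; the exponent must be read as $\tfrac12-\tfrac{C}{|\log b|}$, which is exactly the asymptotic form of your $\frac{1}{2(1-C/\log b)}$. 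All that matters downstream is that the exponent tends to $\tfrac12$ as $b\to0$, and your version does this. Second, the two-sided geometric decay: from $|I_n|\asymp(\sigma_n\sigma_{-n'(n)})^{-2}$ with implicit constants $c,C$, consecutive ratios satisfy only $|I_n|/|I_{n+1}|\ge(c/C)\Lambda^2$, which need not exceed $\Lambda$. Increasing $n_0$ alone, as you suggest, does nothing to fix this; you should instead pass to $n_k=n_0+mk$ with $m$ large enough that $(c/C)\Lambda^{2m}>\Lambda$, and take $\Lambda'=(C/c)B^{2m}$. With this modification (which also preserves the period property, since $n'$ is monotone and the slack per step is bounded by $M_T$, so $\bar M=M_T$ still works along the thinned sequence), your proof is complete and follows the paper's route.
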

\subsection{Lower bound on the Hausdorff dimension of $\mathcal N$}
%%%% Eventually move to preamble.
\newcommand{\base}{\Lambda}
\newcommand{\basex}{m}
\newcommand{\cant}{\mathcal I}
\newcommand{\indset}{J}
\newcommand{\costa}{L}
%%%%%%%%%%%%%%%%%%%%%%%%%%%%%%%%%%%%%%%%%%%%%%%%%%%%%%%%%%%%%%%%%%%%%%%%%%%%%%%%%%%%%%%%%%%%%%%%%%%%%%%%%%%%%%%%%%%%%%%%
Proposition~\ref{p_construction} enables us to build subsets of $\mathcal N$ which are particularly well-suited to
obtain bounds on the Hausdorff dimension.  Let $n_l\to\infty$ be a sequence of natural numbers which diverges
super-exponentially fast; to fix ideas, take $n_l=2^{C\cdot2^l}$ where $C> 1$ is a large constant.  We will now
construct a Cantor set by the following inductive procedure.  Let $\cant_0=I$ be the interval given by
Proposition~\ref{p_construction}.  Then, for $l>0$, assume by induction that we defined sets
$\cant_0\supset\cdots\supset\cant_{l-1}$ satisfying the following properties:
\begin{itemize}
\item for any $a\in\cant_{l-1}$, $f_a$ has at least $l-1$ attracting periodic points of period at most $m_{n_{l-1}+\bar M}$;
\item for $0\leq k\leq l-1$, $\cant_k$ is a finite disjoint union $\cant_k=%
  \bigsqcup_{j}%\in \indset_k}
  \cant_k^j$ where $\cant_k^j$ are intervals such that
\[
C'^{-1}\base^{- n_k}\leq|\cant_k^j|\leq C'\base^{- n_k}\;, \quad C':= \sqrt {\Lambda'};
\]
moreover, the intervals $\cant_{k}^j$ are separated by gaps of at least $\eps_k=D'\base^{-\alpha n_{k}}$, for some
$D'>0$ and $\alpha=(1/2-C/\log b)$;
\item for $0< k\leq l-1$, each interval $\cant_{k-1}^j$ contains a number $\mult_{k}^j$ of components of $\cant_{k}$ so
  that, for some $C''>0$
\[
\mult_{k}^j\geq%
% \frac{\Lambda}{6\Lambda'D'}
C''\base^{- n_{k-1}+\alpha n_k}.
%\frac{\Lambda'}{2\Lambda D'}\base^{- n_{k-1}+\alpha n_k}.%
\]
% in particular $\mult_k^j\geq \mult_k=\frac{\Lambda}{6\Lambda'D'} \base^{- n_{k-1}+\alpha n_k}$.
\end{itemize}
We then define the set $\cant_{l}$ as follows: partition each interval $\cant_{l-1}^j$ in sub-intervals of length
ranging in $(2D'\base^{-\alpha n_l},4D'\base^{-\alpha n_l})$, with $D'$ fixed to be specified later.  For any parameter
$a$ that is an internal\footnote{ That is, we do not consider endpoints of intervals $\cant_{l-1}^j$} endpoint of this
subdivision, we apply Proposition~\ref{p_construction}; we choose $k$ (depending on $a$) so that
\[
C'^{-1}\base^{- n_{l}}\leq|I_k(a)|\leq C'\base^{- n_{l}};
\]
  By design, for all $a'\in I_k(a)$, $f_{a'}$ admits an attracting periodic orbit of period
$m_k$; moreover, our construction ensures that $\rho n_l\leq k+k_0\leq n_l$ with $\rho = \log \Lambda/\log \Lambda'$ and $k_0$ a constant independent of $k,l$; if we
choose $C$ in the definition of $n_l$ to be large enough, we can guarantee that $m_k> m_{n_{l-1}+\bar M}$, which
ensures that the periodic orbit we found at this step is distinct from the ones obtained at any of the previous steps.

Moreover, we know that the distance of $I_k(a)$ from $a$ is at most
\[
DC'^{\alpha}\base^{-\alpha n_{l}};
\]
therefore, if we let $D'=4DC'^\alpha$, such an interval will be separated from the one which
could be obtained applying this construction to any other internal endpoint of our subdivision by at least
$D'\base^{-\alpha n_{l}}$.  We repeat this construction for all internal endpoints and we define $\cant_l$ to be the disjoint
union of all such intervals.  Observe that, by definition, each interval $\cant_{l-1}^j$ contains a number $\mult_{l}^j$
of intervals of $\cant_{l}$ which can be bounded as follows:
\[
\mult_l^j\geq C'' \base^{- n_{l-1}+\alpha n_l}
\]
with $C''=(6C'D')^{-1}$.  This concludes the proof of our induction step.

We can choose $C$ sufficiently large in the definition of the sequence $n_l$ to ensure that $\mult_l^j\geq 2$ for any
$l$ and $j$.  Let $\cant=\bigcap_{l=0}^{\infty}\cant_l$; then by construction, for any $a\in\cant$, $f_a$ admits
infinitely many sinks, i.e.\ $\cant\subset\mathcal N$.  In order to obtain a lower bound on the Hausdorff dimension of
$\cant$, and thus on the Hausdorff dimension of $\mathcal N$, we use the following result (we refer the reader
to~\cite[Example 4.6]{Fa03} for its proof).
\begin{lemm}\label{l_falconer}
  Let $\cant$ be a Cantor set constructed as above so that for all $l$, every interval $\cant_{l-1}^j$ contains at least $\mult_l\geq 2$
  intervals $\cant_{l}^{j'}$, which are separated by gaps of at least $\eps_l$; assume furthermore that
  $\eps_{l}<\eps_{l-1}$.  Then
  \begin{equation}
    \HD \cant\geq \liminf_{l\to\infty} \frac{\log(\mult_1\cdots\mult_{l-1})}{-\log(\mult_l\eps_l)}.\label{e_Falconer}
  \end{equation}
\end{lemm}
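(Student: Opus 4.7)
The plan is to apply the standard mass distribution principle: I would produce a Borel probability measure $\mu$ carried on $\cant$ together with an estimate of the form $\mu(B(x,r)) \leq C r^s$ valid for all $x \in \cant$ and every small $r$, and then conclude $\HD \cant \geq s$ by the usual Frostman-type argument.

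The measure is defined in the most natural way: on the finite set of components of $\cant_l$, distribute total mass $1$ uniformly, so that each component of $\cant_l$ carries mass $(\mult_1 \cdots \mult_l)^{-1}$. These discrete measures are consistent across levels by construction (each level-$l$ component is contained in exactly one level-$(l-1)$ component), and since the $\cant_l$ are nested finite unions of compact intervals, they converge weakly to a probability measure $\mu$ supported on $\cant$.

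To estimate $\mu(B(x,r))$ for small $r$, I use the monotonicity $\eps_l < \eps_{l-1}$ to select $l=l(r)$ with $\eps_l \leq r < \eps_{l-1}$. Since the components of $\cant_{l-1}$ are separated by gaps exceeding $r$, the ball $B(x,r)$ meets at most a uniformly bounded number of them. Within each such component, the level-$l$ sub-intervals are separated by gaps $\geq \eps_l$, so $B(x,r)$ meets at most $\min(\mult_l,\, 2r/\eps_l + 1)$ of them. This gives
\[
\mu(B(x,r)) \leq \frac{C_0\,\min(\mult_l,\,2r/\eps_l + 1)}{\mult_1 \cdots \mult_l}.
\]
I would then split into two regimes. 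When $r \geq \mult_l \eps_l$ the minimum equals $\mult_l$, so $\mu(B(x,r)) \leq C_0/(\mult_1 \cdots \mult_{l-1})$, and requiring this to be $\leq r^s$ in conjunction with $r \geq \mult_l \eps_l$ forces $s \leq \log(\mult_1 \cdots \mult_{l-1})/(-\log(\mult_l \eps_l)) + o(1)$. When $r < \mult_l \eps_l$, the minimum is bounded by $2r/\eps_l + 1 \leq C_1 r/\eps_l$ (using $r \geq \eps_l$), so $\mu(B(x,r)) \leq C_1 r / (\eps_l \mult_1 \cdots \mult_l)$; the corresponding inequality $r^{1-s} \leq \text{const}\cdot \eps_l \mult_1 \cdots \mult_l$, maximized at $r=\mult_l \eps_l$, yields exactly the same bound on $s$ after a short algebraic manipulation.

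The main delicacy is checking that both regimes lead to the same critical exponent, so a single value of $s$ works uniformly for all small $r$. Once this is verified, taking $s$ strictly smaller than $\liminf_{l\to\infty} \log(\mult_1 \cdots \mult_{l-1})/(-\log(\mult_l \eps_l))$ gives $\mu(B(x,r)) \leq C r^s$ uniformly in $x$ and $r$, and the mass distribution principle concludes. I expect no genuine obstacle beyond carefully tracking the constants in the two cases and verifying that the rounding terms are absorbed by the $o(1)$.
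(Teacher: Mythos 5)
Your argument is correct and is essentially the same as the proof the paper points to (Falconer, \emph{Fractal Geometry}, Example 4.6): build the natural mass distribution on the nested construction and invoke the mass distribution principle, and your two-regime case split is just an explicit unpacking of Falconer's one-line estimate $\min(a,b)\le a^{1-s}b^{s}$. One small repair: since each parent contains \emph{at least} (not exactly) $\mult_l$ children, to make the level measures consistent you should either prune to exactly $\mult_l$ children per parent (a subset of $\cant$ suffices for a lower bound on $\HD\cant$) or divide each parent's mass equally among all its children, so that each component of $\cant_l$ carries mass at most $(\mult_1\cdots\mult_l)^{-1}$ --- after which your ball estimate and both regimes go through verbatim.
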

By our construction, the hypotheses of the above lemma hold with $\eps_l=D'\base^{-\alpha n_{l}}$ and $\mult_l=C''
\base^{- n_{l-1}+\alpha n_l}$.  Let us then compute the $\liminf$ on the right hand side of~\eqref{e_Falconer}; for any $l>0$:
\begin{align*}
  \log(\mult_1\cdots\mult_{l-1})&=(l-1)\log C'' +\\&\phantom=+ \left({\alpha n_{l-1}+(\alpha-1)n_{l-2}+\cdots+(\alpha-1)n_{1}-
      n_0}\right)\log\base;\\
  -\log(\mult_l\eps_l)&= -\log(C''D')+ n_{l-1}\log\base.
\end{align*}
Given the super-exponential growth of $n_l$ we obtain:
\[
\liminf_{l\to\infty} \frac{\log(\mult_1\cdots\mult_{l-1})}{-\log(\mult_l\eps_l)}=\alpha=\frac12 - C/\log b.
\]
By Lemma~\ref{l_falconer} we can thus conclude that
\[
\HD\mathcal N\geq \HD \cant \geq \frac12 - C/\log b,
\]
and, since $b$ can be chosen to be arbitrarily small, Proposition~\ref{theoHen} (and thus the Main Theorem) is proved.\qed
% \begin{rema} Observe that
%   \[
%   \HD V \leq \frac{d_u}{2}.
%   \]
%   Indeed we can cover $V$ by the following family: there are at most $2^N$ cycles of
%   period $N+1$ and cyclicity $1$; each cycle persists for an interval of parameter of size
%   at most $\sigma^{-2N}$.  Let us call this family of open sets $(U_i)_{i\in E_N}$.  For
%   fixed large $M$ let $E=\bigcup_{N>M} E_N$ and remark that $(U_i)_{i\in E}$ is a covering
%   of $V$ with elements smaller than $\sigma^{-2M}$.  The following series:
%   \[ \sum_{i\in E} \diam(U_i)^s = C \sum_{N>M} 2^N\sigma^{-2Ns}
%   \] converges if $s<\log 2/(2\log\sigma)=d_u/2$.
% \end{rema}
%%%%%%%%%%%%%%%%%%%%%%%%%%%%%%%%%%%%%%%%%%%%%%%%%%%%%%%%%%%%%%%%%%%%%%%%%%%%%%%%%%%%%%%%%%%%%%%%%%%%%%%%%%%%%%%%%%%%%%%%%%%%%%%%%%%%%%%%%%%%%%%%%%%%%%%%
%%%%%%%%%%%%%%%%%%%%%%%%%%%%%%%%%%%%%%%%%%%%%%%%%%%%%%%%%%%%%%%%%%%%%%%%%%%%%%%%%%%%%%%%%%%%%%%%%%%%%%%%%%%%%%%%%%%%%%%%%%%%%%%%%%%%%%%%%%%%%%%%%%%%%%%%%
%%%%%%%%%%%%%%%%%%%%%%%%%%%%%%%%%%%%%%%%%%%%%%%%%%%%%%%%%%%%%%%%%%%%%%%%%%%%%%%%%%%%%%%%%%%%%%%%%%%%%%%%%%%%%%%%%%%%%%%%%%%%%%%%%%%%%%%%%%%%%%%%%%%%%%%%%

\section{Conclusions}

In this paper we obtained a lower bound on the Hausdorff dimension of the Newhouse parameter set for a family of smooth
dissipative surface diffeomorphisms which nondegenerately unfolds a homoclinic tangency.  In order to obtain our lower
bound we take into account a specially designed class of non-simple sinks.  It is natural to wonder about the optimal of
our result: since we obtain this bound considering only a very special class of non-simple sinks, one would expect that
the Hausdorff dimension of the Newhouse parameter set could indeed be larger than $1/2$.

In the same spirit, we should mention two works the Hausdorff dimension of related pathological phenomena:\cite{L98}
and~\cite{AM02}. The first shows that the Hausdorff dimension of infinitely renormalizable parameters is at least $1/2$
(by focusing only on Misiurewicz renormalization), the second shows that this set has Hausdorff dimension less than $1$.
The latter shows also that the Hausdorff dimension of various other pathological phenomena (inexistence of physical
measures, physical measures supported on expanding Cantor sets, non-ergodic physical measures) is positive.

Our understanding of the topological properties of the Newhouse set are still quite elusive and a more complete
comprehension of the mechanisms of construction of higher complexity sinks could be the key to shed some more light on
the subject.

We also believe that analogous results can be proved in the conservative setting; building from the renormalization
techniques provided e.g.\ in~\cite{Mora-Romero} and with constructions similar to the ones introduced in~\cite{JMD} and
developed in this paper, we believe possible to show that the Hausdorff dimension of the Newhouse parameter set for a
family of conservative diffeomorphisms which non-degenerately unfolds a homoclinic conservative is bounded below by
$1/4$. 
\appendix
\section{A modern proof of Newhouse Theorem~\ref{Newhouserev}}\label{s_proofNewhouse}
Let us first recall the concept of Newhouse thickness, which will be instrumental in the proof of
Theorem~\ref{Newhouserev}.
\begin{defi} Given a Cantor set $K\subset \R$, a \emph{gap} of $K$ is a connected component of $\R\setminus K$.  Given a
  bounded gap $G$ of $K$ and $u$ in the boundary of $G$, a \emph{bridge} $B$ of $K$ at $u$ is the connected component of
  $u$ in the complement of the union of the gaps longer than $|G|$.  The thickness of $K$ at $u$ is:
  $\tau(K,u)=|B|/|G|$.  The \emph{thickness} of $K$, denoted by $\tau(K)$ is the infimum among these $\tau(K,u)$ for all
  boundary points $u$ of bounded gaps.
\end{defi}
\begin{defi}
  If $K$ is a basic (hyperbolic) set of a $C^2$-diffeomorphism, the \emph{stable thickness} of $K$, denoted by
  $\tau^s(K)$, is defined by
  \[\tau^s(K)= \limsup_{\epsilon\to 0}{\tau(W^u_{\epsilon}(q) \cap W^s_{\eta}(K))},\] where
  $\eta>0$ and $q$ a point in $K$.  Proposition 5 of~\cite{Ne79} states that this definition of thickness is independent
  of $\eta$ and $q\in K$.  Thus it is well defined.  The \emph{unstable thickness} $\tau^u(K)$ is defined in a similar
  manner.
\end{defi}
  The following is the celebrated Newhouse Gap Lemma.
\begin{lemm}[{\cite[Lemma 4]{Ne79}, but see also~\cite[Section 4.2]{PT93}}]\label{GapLemma}
  Let $K_1,K_2\subset \R$ be Cantor sets with thickness $\tau_1$ and $\tau_2$.  If $\tau_1\cdot \tau_2>1$, then one of
  the three following possibilities occurs: $K_1$ is contained in a gap of $K_2$, $K_2$ is contained in a gap of $K_1$,
  $K_1\cap K_2\not= \emptyset$.
\end{lemm}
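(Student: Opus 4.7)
The plan is to prove the Gap Lemma by contradiction: assume that $\tau_1\tau_2>1$, that $K_1\cap K_2=\emptyset$, and that neither $K_i$ is contained in any single gap of the other, and derive a contradiction by producing a common point of $K_1\cap K_2$.

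The starting configuration is what I will call an \emph{interlocking pair}: a pair $(G_1,G_2)$ of bounded gaps, $G_i$ a gap of $K_i$, such that each gap contains exactly one endpoint of the other. I would first show that the standing assumptions force such a pair to exist: since $K_2$ is not contained in any gap of $K_1$ but is disjoint from $K_1$, the points of $K_2$ must belong to at least two distinct components of $\R\setminus K_1$; combining with the symmetric statement for $K_1$ and a compactness argument for the convex hulls produces an interlocking configuration.

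The heart of the proof is an iterative step driven by the thickness hypothesis. Given an interlocking pair $(G_1,G_2)$, the inequality $\tau_1\tau_2>1$ rules out the simultaneous validity of $|G_1|\geq\tau_2|G_2|$ and $|G_2|\geq\tau_1|G_1|$, since multiplying these would contradict $\tau_1\tau_2>1$. Suppose then (up to swapping indices) that $|G_2|<\tau_1|G_1|$. Let $w\in K_1$ be the endpoint of $G_1$ lying inside $G_2$; by definition of $\tau_1$, the bridge of $K_1$ at $w$ has length at least $\tau_1|G_1|>|G_2|$, so it extends across $G_2$ past its opposite endpoint $v\in K_2$. Since $v\notin K_1$, it must sit in some bounded gap $G_1'$ of $K_1$ contained in that bridge, and by the definition of the bridge $|G_1'|\leq|G_1|$. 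A short case analysis then locates an appropriate sub-gap $G_2'\subset G_2$ of $K_2$ such that $(G_1',G_2')$ is a new interlocking pair with both $|G_1'|,|G_2'|$ strictly smaller than $|G_1|,|G_2|$ by a factor controlled only by $\tau_1,\tau_2$.

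Iterating yields a nested sequence of interlocking pairs $(G_1^{(n)},G_2^{(n)})$ whose gap lengths shrink geometrically to $0$. The corresponding nested closed intervals intersect in a single point $z$, which is simultaneously a limit of endpoints of the $G_1^{(n)}$ (hence lies in $K_1$) and of the $G_2^{(n)}$ (hence lies in $K_2$). Thus $z\in K_1\cap K_2$, contradicting disjointness. The main obstacle is the bookkeeping in the inductive step: one must verify that the new pair $(G_1',G_2')$ is genuinely interlocking with both gaps strictly smaller, and handle gracefully the degenerate cases in which a bridge terminates exactly at an endpoint of a relevant gap. These are resolved by careful tracking of which side of each gap to extend into at each step.
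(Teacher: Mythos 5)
The paper does not prove this lemma; it cites it to \cite[Lemma 4]{Ne79} and \cite[Section 4.2]{PT93}. Your outline follows the standard Newhouse/Palis--Takens argument (start from a linked pair, use $\tau_1\tau_2>1$ to rule out the ``both-bridges-are-short'' alternative, push a bridge across the other set's gap to find a new linked pair, iterate, and extract a limit point). That skeleton is correct. However, several of the claims you make in the inductive step are not: (a) there is no gap $G_2'$ of $K_2$ with $G_2'\subsetneq G_2$ --- gaps of a fixed Cantor set are connected components of the complement, so they are pairwise disjoint; (b) a single application of your step changes only \emph{one} of the two gaps, not both; and (c) there is no control of the ratio $|G_1'|/|G_1|$ by $\tau_1,\tau_2$: the bridge definition gives only $|G_1'|<|G_1|$ (or $\le$, with the weaker convention used in the paper's definition), with no quantitative lower bound, so the lengths are \emph{not} known to shrink geometrically. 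Related to (c), the outer hulls of the successive linked pairs are not nested (the new gap sticks out past the old hull on one side), so the ``nested closed intervals'' conclusion does not follow as stated.

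The proof can nonetheless be completed along the lines you intend, but the mechanism of convergence is different from what you wrote. After one step you pass from $(G_1,G_2)$ (with, say, $G_1$ left of $G_2$) to a linked pair $(G_1',G_2)$ in which the orientation has flipped, and one checks that the thickness dichotomy now forces the bridge of $K_2$ to be the long one (the other alternative would force $|G_1|<|G_1'|$, contradicting $|G_1'|\le|G_1|$). So after two steps both gaps have been replaced by new, no-longer gaps, and the sequence of gaps of each $K_i$ so produced is a sequence of pairwise distinct gaps all contained in the (compact) convex hull of $K_i$. Since, for any $\epsilon>0$, a Cantor set has only finitely many gaps of length $\ge\epsilon$ in a bounded interval, the gap lengths tend to $0$; and since the gaps are pairwise disjoint, the total length of the hulls is summable, so the endpoints form a Cauchy sequence. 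Its limit is a limit of endpoints of gaps of $K_1$ (hence in $K_1$) and of $K_2$ (hence in $K_2$), giving the contradiction. In short: your overall route is the right one, but the ``geometric decay at a $\tau$-controlled rate'' and ``nested intervals'' claims are false and should be replaced by the disjointness-plus-compactness argument above, and the phrase ``sub-gap $G_2'\subset G_2$'' should be replaced by the correct description of the new $K_2$-gap (it lies outside $G_2$, containing the outer endpoint of $G_1'$).
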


\subsection{H\'enon like maps close to Chebyshev quadratic polynomial}
We now proceed to construct thick Cantor sets for H\'enon-like maps in a range of parameters which bring them close
to the (one-dimensional) dynamics of the Chebyshev quadratic polynomial.  We proceed in two stages: in the first stage
we construct thick Cantor sets for the one-dimensional dynamics; in the second one, we show that the hyperbolic
continuation of such sets give thick invariant Cantor sets for the two-dimensional dynamics.

Let us first recall that any H\'enon $C^r$-like family:
\[f_a\colon (x,y)\mapsto (x^2+a+y,-b x)+(A_a(x,y),b B(x,y))\] is $C^r$-close to the H\'enon family $(h_{a,0})_a$ defined
by $h_{a,0}(x,y)=(x^2+y+a,0)$.  The restriction of $h_{a,0}$ to the line $\R\times \{0\}$ is equal to $Q_a(x)=x^2+a$.
The map $f_a$ is also $b$-$C^r$-close to the real unimodal map $P(x)=x^2+a+A_a(x,0)$.  Up to conjugacy with a
translation, we can assume that $DP(0)=0$.  Since $P$ is unimodal, it has a two--branched inverse: one branch is orientation
preserving and will be denoted with $P\inv_+$ and the other one is orientation reversing and will be denoted with
$P\inv_-$; observe that, by construction, we have  $P\inv_\pm=(P|{\R^\pm})\inv$.

We will construct the wild basic set for $a$ close to $-2$.  For such $a$, $P$ has two fixed points: denote the
orientation reversing fixed point by $\alpha\approx -1$ and the orientation preserving one by $\beta\approx 2$.  We define
inductively $(\alpha^\pm_{n})_{n\ge 0}$ as follows: let $\alpha^-_{0}=\alpha$ and $\alpha^+_{0}=P\inv_+(\alpha)$; then
we let
\begin{align*}
\alpha_{n}^\pm&:=P\inv_\pm(\alpha^+_{n-1})\in\R^\pm.
\end{align*}
By linearizing the fixed point $\beta$ of $P$, we remark that $(\alpha^+_n)_n$ converges to $\beta$ and
$(\alpha^-_{n})_n$ converges to $P\inv_-(\beta)$.  To fix notations, let $\alpha^+_{\infty}=\beta$ and
$\alpha^-_{\infty}=P\inv_-(\beta)$.  Note that the distance between $\alpha^\pm_{n}$ and $\alpha^\pm_{\infty}$ is of the
same order as $DP(\beta)^{-n}$. 
Define $\tilde\alpha^\pm_1=\alpha^\pm_0$; if the critical value $a$ of $P$ is smaller than $\alpha^-_{n-1}$, we define
$\tilde \alpha^\pm_{n}:=
P\inv_\pm(\alpha^-_{n-1})\in\R^\pm$.  Likewise, if $a<\alpha^-_\infty$, define $\tilde\alpha^\pm_\infty:=P\inv_\pm(\alpha^-_\infty)$.\\
Two Cantor sets will be important for our purposes:
\begin{align*}
  C_1(P)&:= \bigcap_{k\ge 0} P^{-k} \left([\alpha^-_1,\tilde\alpha^-_2]\sqcup[\tilde\alpha^+_2,\alpha^+_1]\right),\text{ if }a< \alpha^-_{1};\\
  C_2(P)&:= \bigcap_{k\ge 0} P^{-k}
  \left([\alpha^-_\infty,\tilde\alpha^-\infty]\sqcup[\tilde\alpha^+_\infty,\alpha^+_\infty]\right),\text{ if
  }a<\alpha^+_{ \infty }.
\end{align*}
For $r\ge 2$, $C_1(P)$ is a hyperbolic Cantor set, and we are going to show that $C_2(P)$ is also a hyperbolic Cantor
set. Then we will compute the thickness of these sets.  The hyperbolic continuation of $C_1(P)$ will be contained in the
wild basic set $\tilde K$ for $f_a$, whereas the hyperbolic continuation of $C_2(\mathcal RP)$ will be embedded in
$\tilde K$ as a basic set $K_2$, where $\mathcal RP$ is a pre-renormalization of $P$. A general picture of the construction is given Figure \ref{f_construction}.

  \begin{figure}[!h]
    \centering
    \includegraphics{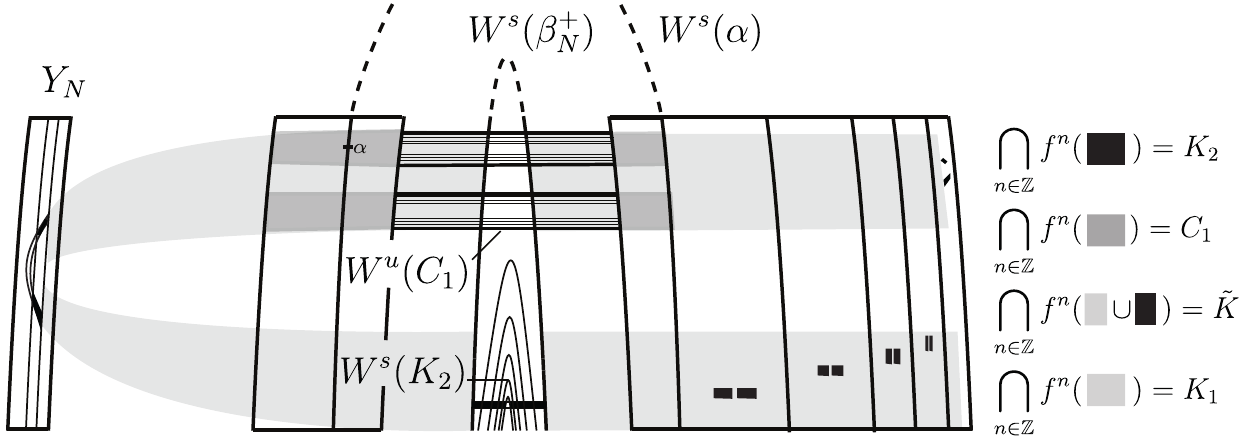}
    \caption{Sketch of the relative positions of the Cantor sets $C_1$, $K_2$ and $\tilde K$.}
    \label{f_construction}
  \end{figure}

Let us now define $I_0=\tilde I_0=[\alpha^-_0,\alpha^+_0]$, and for $n\ge1$:
\begin{align*}
  I^-_n&=[\alpha^-_n,\alpha^-_{n-1}]&
  I^+_n&=[\alpha^+_{n-1},\alpha^+_n]&
  \tilde{I}^-_n&=[\tilde\alpha^-_{n-1},\tilde\alpha^-_n]&
  \tilde{I}^+_n&=[\tilde\alpha^+_n,\tilde\alpha^+_{n-1}]
\end{align*}
when well defined. Then they are diffeomorphically sent onto $I_0$ by $P^n$.
\begin{lemm}\label{Cn}
  There exist $D>0$ and $\epsilon>0$, so that for every $P$ $C^2$-$\epsilon$-close to $Q_{-2}$, whose critical value $a$
  satisfies $a<\alpha^-_\infty$, it holds that $C_2(P)$ is hyperbolic and its thickness is at least
  $D/\sqrt{\alpha^-_\infty-a}$.
\end{lemm}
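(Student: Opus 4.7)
The plan is to represent $C_2(P)$ as the invariant set of a full two-branch Markov map on $J_1 \sqcup J_2$, where $J_1 := [\alpha^-_\infty,\tilde\alpha^-_\infty]$ and $J_2 := [\tilde\alpha^+_\infty,\alpha^+_\infty]$, to estimate its thickness at the central gap $G := (\tilde\alpha^-_\infty,\tilde\alpha^+_\infty)$ by a direct computation, and to propagate the estimate to all boundary points by bounded distortion. The Markov structure itself comes for free from the defining relations $P(\alpha^\pm_\infty)=\beta=\alpha^+_\infty$ and $P(\tilde\alpha^\pm_\infty)=\alpha^-_\infty$, together with $J_1 \subset \R^-$, $J_2 \subset \R^+$ avoiding the critical point $0$: each branch $P|_{J_i}$ is then a $C^2$-diffeomorphism onto $L_0 := [\alpha^-_\infty,\alpha^+_\infty] \supset J_1 \sqcup J_2$, so $C_2(P)$ is coded by the full shift on $\{1,2\}^{\mathbb N}$. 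In particular, every bounded gap of $C_2(P)$ is a pullback $P^{-n}_\omega(G)$ along an admissible word $\omega$, and the adjacent bridge is the corresponding pullback of $J_1$ or $J_2$.

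For the base-level thickness, I would use the approximation $P(x)=x^2+a+O(\epsilon)$ and solve $P(x)=\alpha^-_\infty$ to get $\tilde\alpha^\pm_\infty = \pm\sqrt{\alpha^-_\infty-a}+O(\epsilon)$, hence
\[
|G| = 2\sqrt{\alpha^-_\infty-a} + O(\epsilon), \qquad |J_1| = \sqrt{\beta-a}-\sqrt{\alpha^-_\infty-a}+O(\epsilon) \ge c_0>0
\]
uniformly for $P$ sufficiently close to $Q_{-2}$ in $C^2$ and $a<\alpha^-_\infty$. The bridge at $\tilde\alpha^-_\infty$ from within $J_1$ coincides with all of $J_1$ provided no interior gap exceeds $|G|$; since every such gap is some $P^{-n}_\omega(G)$, this reduces to an inductive contraction estimate based on $|(P^{-1}_\pm)'|(y) = 1/(2\sqrt{y-a})$, which is bounded by $1/(2\sqrt{-a-O(\epsilon)})<1/2$ for $y$ in a neighborhood of $0$ (because $-a\approx 2$). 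Combined with a cylinder-shrinking argument this closes the induction and yields
\[
\tau(C_2(P),\tilde\alpha^\pm_\infty) \ge \frac{|J_1|}{|G|} \ge \frac{D_0}{\sqrt{\alpha^-_\infty-a}}.
\]

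To extend this bound to every boundary point $u$ of an arbitrary bounded gap $P^{-n}_\omega(G)$, I would note that the bridge-to-gap ratio at $u$ equals $|J_i|/|G|$ up to the distortion of $P^{-n}_\omega$ on a neighborhood of $G \cup J_i$. A Koebe-type bounded-distortion argument for the $C^2$ Markov structure provides a uniform distortion constant $C_0$ independent of $\omega$ and $n$, and the lemma follows with $D=D_0/C_0$. Hyperbolicity of $C_2(P)$ is then a consequence of the geometric shrinking of cylinders (each branch contracts by at most $|J_i|/|L_0| \le 1/2$ on average) combined with bounded distortion.

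The main obstacle lies in obtaining the distortion constant $C_0$ uniformly in $a$ as $a\uparrow\alpha^-_\infty$. On the whole of $J_1\cup J_2$ the derivative $|P'|$ vanishes like $\sqrt{\alpha^-_\infty-a}$ at the endpoints $\tilde\alpha^\pm_\infty$, so $\|P''/P'\|_\infty$ on $J_1\cup J_2$ blows up in that limit and the naive $C^2$ telescoping estimate degenerates. The resolution is to perform the telescoping only along orbits that actually visit $C_2(P)$: deeper sub-cylinders sit strictly inside $J_i$ at a definite distance from $\tilde\alpha^\pm_\infty$ by geometric shrinking, and the bad first-level contribution near the boundary of $J_i$ is balanced by the ambient Koebe space provided by $J_i$ itself, whose length is bounded below by $c_0>0$. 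This keeps the total distortion uniformly bounded in $a$ and closes the proof.
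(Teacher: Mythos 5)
Your overall plan is the same as the paper's: identify the central gap $G=(\tilde\alpha^-_\infty,\tilde\alpha^+_\infty)$ of length $\sim\sqrt{\alpha^-_\infty-a}$, observe that its neighbouring bridges have length of order one, and propagate the base ratio to every gap endpoint via a distortion estimate for the inverse branches. The genuine gap is in the distortion step, and it is exactly the point you flag yourself: with the two-symbol Markov map on $J_1\sqcup J_2$, the derivative $|P'|$ degenerates like $\sqrt{\alpha^-_\infty-a}$ at the endpoints $\tilde\alpha^\pm_\infty$, so $\|P''/P'\|_\infty$ on $J_1\sqcup J_2$ blows up. Your proposed fix does not close this. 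First, "deeper sub-cylinders sit strictly inside $J_i$ at a definite distance from $\tilde\alpha^\pm_\infty$" is false: $\tilde\alpha^\pm_\infty\in C_2(P)$ (its forward orbit is $\tilde\alpha^\pm_\infty\mapsto\alpha^-_\infty\mapsto\beta\mapsto\beta\cdots$, coded $1,1,2,2,\dots$), so there is a nested sequence of cylinders of every depth abutting $\tilde\alpha^\pm_\infty$. Second, the "ambient Koebe space" near $\tilde\alpha^\pm_\infty$ does not exist in the form you need: $\tilde\alpha^\pm_\infty=P^{-1}_\pm(\alpha^-_\infty)$ and $P^{-1}_\pm$ is defined only on $[a,\infty)$, so the extension of the inverse branch past the relevant endpoint of $L_0$ is of size $\alpha^-_\infty-a$ — precisely the quantity going to zero — not $c_0>0$. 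Finally, your argument for hyperbolicity ("geometric shrinking of cylinders combined with bounded distortion") is circular, since it appeals to the very distortion bound in question; the two-branch map is not uniformly expanding on $J_1\cup J_2$, only eventually expanding.

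The paper sidesteps the degeneracy by abandoning the coarse two-symbol structure and working with the countable Markov collection $\mathcal A=\{\tilde I^\pm_n : n\ge 1\}\cup\{I_0\}$, i.e.\ an inducing/first-return-type partition, together with a linearization Sublemma~\ref{linearization}. The key technical input is the change of metric $g(x)=1/\sqrt{4-x^2}$ (the expansion metric of the Chebyshev map, bounded on $I_0\approx[-1,1]$), under which $Q_{-2}$ expands uniformly; this gives the hyperbolic-times inequality $|DP^N|\ge C\lambda^N$ along admissible chains ending at $I_0$, and hence uniformly bounded distortion for the pullbacks — exactly the ingredient missing from your sketch. With that Sublemma, the thickness estimate $\tau\gtrsim |B_0|/(C^2|G_0|)\sim 1/\sqrt{\alpha^-_\infty-a}$ and the hyperbolicity of $C_2(P)$ both follow at once, with all constants uniform in $a\uparrow\alpha^-_\infty$. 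If you want to salvage your two-symbol approach, you would have to make precise an argument of this type (e.g.\ conjugate by $x\mapsto\arccos(-x/2)$ to the tent map, or induce on returns to a definite core of $L_0$); as written, the distortion constant is not established.
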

\begin{proof}
  Let $\mathcal A:= \{\tilde I^{\pm}_n;\; n\ge 1\}\cup \{I_{0}\}$.  Let $\mathcal T$ be formed by the
  pairs $(J ,J')\in\mathcal A^2$ so that $P(J)\supset J'$ and $J\not= I_{0}$.  A sequence of intervals $(J_i)_{i=1}^n\in
  \mathcal A^n$ is \emph{admissible} if $(J_{i},J_{i+1})$ is in $\mathcal T$ for every $i$.
  \begin{sublemm}\label{linearization}
    There exists $C>0$ and $\epsilon>0$, so that for every $P$ $C^2$-$\epsilon$-close to $Q_{-2}$ and satisfying
    $a<\alpha^-_\infty$, for every $N\ge 1$ and every admissible chain $(J_i)_{i=1}^N\in \mathcal A^N$, there exist
    diffeomorphisms $\phi$ and $\psi$ and an affine map $A$ so that:
    \begin{itemize}
    \item the norm of $DA$ is at least $(1.5)^N/C$;
    \item $\phi\circ (P|J_N)\circ \cdots \circ (P|J_2)\circ  (P|J_1)\circ \psi=A$;
    \item the $C^2$-norm of $\phi$, $\psi$ and their inverses are bounded by $C$.
    \end{itemize}
  \end{sublemm}
  This sublemma enables us to prove Lemma~\ref{Cn}: in fact, the central gap $G_0\ni 0$ of $C_2(P)$ has length of the
  order of $\sqrt{\alpha^-_\infty-a}$.  For any other gap $G$ of $C_2(P)$, there exists an admissible chain
  $(J_i)_{i=1}^{N+1}$ ending at $J_{N+1}=I_{0}$, so that $G:= (P|J_1)^{-1}\circ (P|J_2)^{-1}\circ \cdots \circ
  (P|J_N)^{-1}(G_0)$.  An associated bridge $B$ to $G$ has length at least the order of $(P|J_1)^{-1}\circ
  (P|J_2)^{-1}\circ \cdots \circ (P|J_N)^{-1}(B_0)$, where $B_0$ is a connected component of $I_{0}\setminus G_0$.
  Hence, by Sublemma~\ref{linearization}, the thickness associated to the gap $G$ is at least of the order of
  $\frac{|B_0|}{C^2 |G_0|}$, which is of the same order as $1/\sqrt{\alpha^-_\infty-a}$.\end{proof}
\begin{proof}[Proof of Lemma~\ref{linearization}] 
  To prove this linearization, we only need to control the distortion of any admissible chain ending at $I_{0}$.  It is
  well known that such a distortion is bounded if there exists $C>0$ and $\lambda>1.5$, so that for every admissible
  chain $(J_i)_{i=1}^{N+1}$ ending at $J_{N+1}= I_{0}$, the following hyperbolic times inequality holds:
  \[ |DP^N(x)|\ge C \lambda^N,\; \forall x\in (P|J_1)^{-1}\circ (P|J_2)^{-1}\circ \cdots \circ (P|J_N)^{-1}(I_{0}).\]
  Such an inequality is easy to prove when $J_1,\dots, J_N$ is a sequence of the form $I^\pm_{N-1}, I^+_{N-2},\cdots
  ,I^+_1,I_0$.  To prove the general case, it is sufficient to show the existence of a bounded Riemannian metric $g$ on
  $I_0$ so that for every $x\in \tilde I^\pm_n$, we have:
  \[
  |DP^n(x)|>(1.5)^n.
  \]
  The map $Q_{-2}(x)= x^2+a$ is expanding for the metric $g(x)= 1/\sqrt{x^2-4}$, which is bounded on $[-1,1]$.  Hence
  for every $M\ge 0$, if $P$ is close enough to $Q_{-2}$, the restriction $P^n|{\tilde I^{\pm}_n}$ is expanding for the
  metric $g$ and every $M\ge n\ge 2$ ($g|\cup_{n\le M} I_n\cup \tilde I_n$ is bounded).  On the other hand, by using the linearization of the fixed point $\beta$, we
  remark that for $n$ sufficiently large, the derivative of $P^n|{\tilde I^ \pm_n}$ is at least of the same order as
  ${|DP(\beta)|^{n/2}}$.
\end{proof}

Now that we have constructed invariant Cantor sets for the one-dimensional dynamics and that we have obtained bounds on
their thicknesses, we proceed to consider the hyperbolic continuation of such invariant sets and estimate their
thicknesses.
\begin{lemm}\label{thickness2dsimple}
  For any $C^2$-H\'enon like map $f$ close to $h_{-2,0}$, the Cantor set $C_1(Q_{-2})$ persists as a basic set
  $C_1(f)$.  The thickness $\tau^u(C_1(f))$ is of the order of $b^2$, with $b:= |\det\, D_0f|$.
\end{lemm}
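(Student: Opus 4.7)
The plan is to prove Lemma~\ref{thickness2dsimple} in two steps: first establish persistence of $C_1(Q_{-2})$ as a basic set $C_1(f)$ for the 2D map $f$, and then estimate the unstable thickness $\tau^u(C_1(f))$.

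\emph{Step 1: Persistence.} By Sublemma~\ref{linearization} applied to the one-dimensional map $Q_{-2}$, the set $C_1(Q_{-2})$ is a uniformly expanding Cantor set with uniform bounds on admissible chains. To lift this hyperbolic structure to two dimensions, I would construct invariant cone fields on a neighborhood $U$ of $C_1(Q_{-2})\times\{0\}$: an unstable cone containing the almost-horizontal direction $(1,-b/(2x))$ of the unstable eigenvector of $Df$, and a steeper stable cone containing $(1,-2x)$. For $f$ sufficiently $C^2$-close to $h_{-2,0}$, these cones are strictly $Df$-invariant (resp.\ $Df^{-1}$-invariant) with uniform expansion (resp.\ contraction) estimates inherited from the one-dimensional bounds. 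The maximal invariant set $\bigcap_{n\in\Z} f^n(U)$ is then a hyperbolic basic set; by comparing itineraries with respect to the natural two-element Markov cover, it is topologically conjugate to the subshift underlying $C_1(Q_{-2})$, and this is what we label $C_1(f)$.

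\emph{Step 2: Thickness estimate.} For $\tau^u(C_1(f))$ I would describe explicitly the Cantor structure of $W^s_\epsilon(q)\cap W^u_\eta(C_1(f))$ on a local stable manifold, parametrizing its points by backward itineraries of the 2-symbol Markov structure. Two such intersection points with codings first disagreeing at step $-n$ lie at stable distance controlled by a product of stable contractions $\mu_k\sim b/\lambda_k$ along the corresponding admissible chain, modulo bounded distortion (a two-dimensional analog of Sublemma~\ref{linearization}). The key geometric input is that each Markov rectangle $R_i$ has stable extent of order $b$ (since the attractor lies in a strip $|y|\lesssim b$ forced by $y$-coordinate $=-bx$), so after one iterate the sub-strips $f(R_i)\cap R_j$ have stable extent $\sim b\cdot\mu\sim b^2/\lambda$, while the separating gaps inside $R_j$ remain of order $b$. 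Iterating this refinement and taking the infimum over boundary points of the resulting nested structure then yields the lower bound $\tau^u(C_1(f))\gtrsim b^2$.

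\emph{Main obstacle.} The persistence part is essentially a cone-field argument adapted to the strongly dissipative regime. The genuine difficulty lies in the thickness estimate, since a product-structure argument that ignores the Hénon-like folding would naively suggest $\tau^u\sim b$ rather than $b^2$. Extracting the correct power requires tracking precisely how the folding interacts with the $2$-symbol subshift at every level, together with uniform distortion control along arbitrary admissible backward chains — the two-dimensional analog of Sublemma~\ref{linearization}. This uniform distortion bound is the technical heart of the proof; the geometric estimate of bridges and gaps then follows by combining it with the rectangle geometry described above.
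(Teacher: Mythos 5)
The paper actually gives no proof of Lemma~\ref{thickness2dsimple}: it is stated and then the text moves directly to Lemma~\ref{Thickness2d}, whose proof establishes horizontality of unstable leaves for $C_2(f)$ and invokes the one-dimensional argument of Lemma~\ref{Cn}. So there is no argument in the paper to compare yours against; I can only assess the proposal on its own terms.

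Your Step~1 (persistence via invariant cone fields and itinerary comparison) is standard and unobjectionable, though one should note that the unstable lamination is roughly \emph{horizontal} (direction $\approx(1,-b/2x)$) and the stable lamination roughly of slope $\approx -2x$; your displayed cone directions appear to have these swapped.

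The genuine gap is in Step~2, and you partially flag it yourself, but the proposal as written is internally inconsistent. Your own accounting gives bridge width $\sim b\cdot\mu\sim b^{2}/\lambda$ and gap width $\sim b$, so the bridge-to-gap ratio is $\sim b/\lambda$, i.e.\ of order $b$, \emph{not} $b^{2}$. You then assert ``yields the lower bound $\tau^{u}(C_1(f))\gtrsim b^{2}$,'' which is vacuously true (since $b\ge b^{2}$) but is not the content of the lemma, which claims $\tau^{u}$ is \emph{of the order of} $b^{2}$. In the next paragraph you concede that the product-structure argument ``would naively suggest $\tau^{u}\sim b$ rather than $b^{2}$'' and that extracting the correct power requires ``tracking precisely how the folding interacts with the $2$-symbol subshift at every level'' — but this step, which you yourself identify as the ``technical heart,'' is only announced, not carried out. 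If the heuristic genuinely produced a rigorous lower bound $\tau^{u}\gtrsim b$ it would in fact \emph{contradict} the two-sided assertion $\tau^{u}\sim b^{2}$; if instead the heuristic is unreliable (as you suggest), then it does not supply any bound at all. Either way, the exponent $2$ is not established, and the one concrete reason you give for it (rectangle of stable extent $\sim b$ times contraction $\mu\sim b/\lambda$) only accounts for the numerator, not for why the gaps should remain of size $\sim1$ rather than $\sim b$. To complete the proof along your lines one would need, at minimum, the two-dimensional bounded-distortion statement along arbitrary admissible backward chains that you mention, plus an actual computation of the gap sizes created by the fold at each level; as written, the proposal leaves the decisive estimate open.
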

Similarly to Lemma~\ref{Cn}, we now prove:
\begin{lemm}\label{Thickness2d}
There exist $C>0$, $\epsilon>0$, so that for every H\'enon-like map
\[f(x,y)=(x^2+y+a,-b x)+(A(x,y),b B(x,y)),\] such that $P(x):=x^2+a+A(x,0)$ is $C^2$-$\epsilon$-close to $Q_{-2}$ and
such that the critical value $a$ of $P$ satisfies $a<\alpha^-_\infty$ and $b\ll(\alpha^-_\infty-a)$,
then the hyperbolic continuation $C_2(f)$ of $C_2(P)$ is such that $\tau^s(C_2(f))\geq C/\sqrt{\alpha^-_\infty-a}$.
\end{lemm}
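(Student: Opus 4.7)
The plan is to reduce the two-dimensional thickness computation to the one-dimensional estimate of Lemma~\ref{Cn}, exploiting the smallness of $b$ in a quantitative way. First, I would establish the existence of the hyperbolic continuation $C_2(f)$ as a basic set for $f$: since $C_2(P)$ is a hyperbolic (expanding) Cantor set for the one-dimensional map $P$, the two-dimensional map $f$ with small Jacobian $b$ can be viewed as a small $C^2$-perturbation of $(x,y)\mapsto (P(x),0)$, and the standard structural stability of hyperbolic sets (together with a stable/unstable cone field argument valid for $b$ sufficiently small) produces a basic set $C_2(f)$ whose projection to the $x$-axis is close to $C_2(P)$.

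Next, to estimate $\tau^s(C_2(f))$, I would fix a point $q\in C_2(f)$ and study the Cantor set $K^s:=W^u_\epsilon(q)\cap W^s_\eta(C_2(f))$. Since $b$ is small, the unstable manifold $W^u_\epsilon(q)$ is a nearly horizontal $C^2$-graph over an interval of the $x$-axis, and the local stable manifolds through points of $C_2(f)$ are nearly vertical graphs. Consequently the holonomy-like projection $\pi\colon K^s\to \R$ onto the $x$-axis is a $C^{1+\Lip}$-diffeomorphism onto its image with derivative uniformly close to $1$, and $\pi(K^s)$ is $\mathcal O(b)$-close to the one-dimensional Cantor set $C_2(P)$. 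Every gap $G$ of $K^s$ corresponds bijectively to a gap $\pi(G)$ of $\pi(K^s)$, and similarly for bridges, with ratios of lengths preserved up to a factor $1+\mathcal O(b)$.

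The heart of the matter is then to transport the distortion/linearization control of Sublemma~\ref{linearization} to the two-dimensional setting. Because $b$ is much smaller than $\sqrt{\alpha^-_\infty-a}$, the central gap of $\pi(K^s)$ has length comparable to $\sqrt{\alpha^-_\infty-a}$, and its preimages under the admissible chains of returns to $I_0$ inherit (via the nearly one-dimensional behaviour in the unstable direction and bounded distortion in the stable direction) thickness estimates of the same order as the one-dimensional ones. I would carry out a hyperbolic-times argument for $f$ along these admissible chains, using a Riemannian metric which is the product of the metric $g(x)=1/\sqrt{x^2-4}$ on the horizontal direction with the Euclidean metric on the vertical direction; this is the two-dimensional analogue of the metric used in the proof of Sublemma~\ref{linearization}, and for $b\ll(\alpha^-_\infty-a)$ the vertical contribution is dominated by the horizontal expansion.

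The main obstacle will be controlling the two-dimensional distortion along long admissible orbits, particularly near the critical region where the renormalized tip of $C_2(P)$ accumulates on $\beta$. The hypothesis $b\ll(\alpha^-_\infty-a)$ is precisely what is required to ensure that the two-dimensional perturbation does not destroy the $\sqrt{\alpha^-_\infty-a}$-scale of the central gap, which is the delicate scale that determines the thickness. Once this distortion control is in place, the same combinatorial construction of bridges and gaps used in Lemma~\ref{Cn} applies verbatim, and one concludes $\tau^s(C_2(f))\geq C/\sqrt{\alpha^-_\infty-a}$ with a constant $C$ independent of $b$ in the allowed range.
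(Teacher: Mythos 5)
Your plan is recognizably the same as the paper's: reduce the two-dimensional thickness estimate to the one-dimensional Lemma~\ref{Cn} by showing that the relevant Cantor set on the unstable manifold has, up to a controlled distortion, the same gap/bridge structure as $C_2(P)$. However, the execution has several gaps that correspond precisely to the substance of the paper's proof.

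First, the geometric claim that ``the local stable manifolds through points of $C_2(f)$ are nearly vertical graphs'' is incorrect. For a strongly dissipative H\'enon-like map the local stable manifolds are $b$-$C^2$-close to the parabolas $\{P(x)+y=\text{const}\}$, whose slope is of order $|x|$, hence of order $1$ over most of $C_2(f)$; they are only ``nearly vertical'' at the tip $x\approx 0$, from which $C_2(f)$ in fact stays away. This misreads the geometry even though the ultimate conclusion you draw from it (that the vertical projection of $K^s:=W^u_\epsilon(q)\cap W^s_\eta(C_2(f))$ to the $x$-axis has derivative close to $1$) is salvageable for a different reason — namely, the near-horizontality of the unstable manifold.

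Second, and more seriously, the near-horizontality of the unstable manifold is exactly what needs to be \emph{proved}, and you take it as given. The paper establishes it in two steps. (a) Using the fact that $C_2(f)$ lies in the strip $|y|\lesssim b$ and must lie below $W^s_{\loc}(\beta(f))$ and above the stable manifolds of $\tilde\alpha^\pm_\infty(f)$ (which are $b$-$C^2$-close to parabolas with apex at height $\alpha^-_\infty-a\gg b$), the set $C_2(f)$ stays at horizontal distance of order $\sqrt{\alpha^-_\infty-a}$ from the critical line $\{x=0\}$; this is where the hypothesis $b\ll(\alpha^-_\infty-a)$ enters, and it is what makes the unstable cone $\chi=\{|v|\le|u|\sqrt{\alpha^-_\infty-a}\}$ invariant, yielding $C^1$-horizontality. (b) A separate argument (the monotone growth of $\|D_zf^n(w)\|$ along admissible preimages in $\chi$, followed by an appeal to \cite[Lemma~2.4]{YW}) is used to bound the curvature of the local unstable manifolds by $\mathcal O(b)$, i.e.\ to upgrade to $C^2$-horizontality. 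Your proposal contains neither the invariant cone computation nor the curvature control; the ``product metric'' suggestion controls expansion rates but not the geometry of the unstable leaves, and hence does not substitute for either step.

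Third, the assertion that ``ratios of lengths are preserved up to a factor $1+\mathcal O(b)$'' between gaps/bridges of $K^s$ and those of $C_2(P)$ is essentially the conclusion of the lemma, restated rather than derived. Once the unstable leaves are known to be $C^2$-close to horizontal lines, this follows by applying the bounded-distortion argument of Sublemma~\ref{linearization} along them — which is what the paper means by ``the proof is the same as for Lemma~\ref{Cn}''. Without the $C^2$-horizontality, the one-dimensional distortion estimates do not transfer, and the claim is unsupported.
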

\begin{proof}%
  The proof is the same as for Lemma~\ref{Cn}, provided that we show that the local unstable manifolds of $C_2(f)$ are
  $C^2$-close to horizontal lines.  We now prove this horizontality condition.

  Let $\beta(f)$ (resp. $\alpha(f)$) be the hyperbolic continuations of the orientation preserving (resp. reversing)
  fixed point $\beta$ (resp. $\alpha$) of $P$.

  First we notice that local stable manifolds of $\beta(f)$ and $\alpha(f)$ are $b$-$C^2$-close to:
  \begin{align*}
    W^s_{\loc}(\beta)&= \{(x,y)\in \R\times [-1,+\infty):\; P(x)+y=\beta\}\\
    W^s_{\loc}(\alpha)&=\{(x,y)\in \R\times [-1,+\infty):\; P(x)+y=\alpha\}
  \end{align*}
	By continuity, the hyperbolic continuation $C_2(f)$ of $C_2(P)$
  must stay below $W^s_{\loc}(\beta)$.  Furthermore, from the analytic expression of $f$, $C_2(f)$ is included in the strip
  $b$-close to:
  \[\{(x,y)\in \R\times [-10 b ,10 b]:\; P(x)+y\le \beta\}\]

  Recall that the central gap $G_0$ of $C_2(P)$ is given by $[\alpha^-_\infty, \alpha^+_\infty]$; similarly,
  hyperbolic continuations $\tilde \alpha^\pm_\infty(f)$ of $\tilde \alpha^\pm_\infty$ have a local stable manifold which
  is $b$-$C^2$-close to:
  \[W^s_{\loc}(\tilde \alpha^\pm_ \infty)= \{(x,y)\in \R\times [-1,+\infty):\; P(x)+y=\alpha^-_\infty\}\] It is a
  parabola with its top at $y$ of the order $\alpha^-_\infty-a$, and so large with respect to $b$.  Hence this parabola
  cuts the strip $\R\times [-10 b ,10 b]$ at a distance of the order of $\sqrt{\alpha^+_{ \infty }-a}$ of the line $x=0$.

  By continuity the set $C_2(f)$ must be above this parabola.  Hence $C_2(f)$ remains at a distance of the order of
  $\sqrt{\alpha^-_\infty-a}$ of the line $x=0$.

  Thus at a neighborhood of $C_2(f)$, the cone \[\chi=\{(u,v)\in \R^2:\; |v|\le |u| \sqrt{\alpha^-_\infty-a} \}\] is sent
  into itself, and so the local unstable manifolds are included in it.  This implies that they are $C^1$-close to
  horizontal curves.

  Let $Y_e$ be the intersection of $\R\times [-10 b, 10 b]$ with the domain below the local stable manifold of
  $\alpha(f)$ and above the local stable manifold of $\tilde \alpha^\pm_\infty(f)$.  We notice that $Y_e$ is close to
  the compact set $([\alpha^-_{0}, \tilde \alpha^-_{\infty }]\cup [\tilde \alpha^+_{\infty },\alpha^+_{0}])\times
  \{0\}$.

  Similarly to the one dimensional case, every point $z\in C_2(f)\cap Y_e $ has infinitely many preimages in $Y_e$.  From
  the same argument as for Lemma~\ref{linearization}, if $z'$ is sent by $f^n$ to $z$, and $w\in \chi$, then:
  \[\|D_z f^n(w)\|\ge \|D_z f^j(w)\|,\quad \forall j\le n.\] By~\cite[Lemma 2.4]{YW}, we can conclude that the
  local unstable of $z$ has a curvature of the same order as $b$.
\end{proof}
\subsection{Misiurewicz pre-renormalization}
% \marginal{It is may be possible to do not use this and save one page}%
In this subsection we quickly sketch the construction of the Misiurewicz pre-renormalization of a smooth family of
H\'enon-like maps; we refer once more the reader to~\cite{MisuRen} for more details.

Let, as before, $(f_a)_a$ be a $C^2$-H\'enon like family of class $C^4$.  Hence $f_a$ has the form
$f_a(x,y)= (x^2+a+y,-bx )+(A_a(x,y),bB_a(x,y))$.  Put $P_a:= x^2+a+A_a(x,0)$.

The set $K(P_a):= \{ \alpha^\pm_n(P_a); \; n\geq 0\}\cup \{\alpha^\pm_\infty(P_a)\}$ is hyperbolic for $P_a$.  Let
$K(f)$ be a hyperbolic continuation of $K(P_a)\times \{0\}$ for a map $f$.  Hence for every continuation $k\in K(f)$
of $(x_0,0)\in K(P_a)\times \{0\}$, a local stable manifold $W^s_{\loc}(k,f)$ of $k$ is $C^2$-close to the connected
component of $(x_0,0)$ in $\{(x,y)\in \R\times [-0.2,0.2]:\; x^2-2+y=x_0^2-2\}$.

For any $n\ge 0$, let $\hat Y^{\pm}_{n,f}$ be the rectangle bounded by the local stable manifolds $W^s(k,f)$ associated to
the boundary points $k\in \partial I^\pm_{n}$, and the lines $\{(x,y)\in\R^2:y=\pm 0.1\}$; let furthermore $\hat I^\pm_{ n,f }:= \hat Y^\pm_{k,f }\cap \{(x,y)\in \R^2: y=0\}$.

Observe that by linearizing the fixed point $\beta^+$, there exists $C>0$, such that for every $0\le k\le n$, we have:
\[
\forall x\in \hat I^\pm_{n,f},\; |DP_{-2}^k(x)|\ge C |DP_{-2}(\beta^+)|^k.\] By~\cite[Section 2.2.2]{MisuRen}, there exists $\epsilon
>0$, a $C^4$-neighborhood $V$ of the H\'non map $h_{-2,0}$, so that for each $f\in V$ which is $C^2$-H\'enon like, any $n\ge 0$, there
exist $C^2$-coordinates $y_{n,f}$ on an $\epsilon$-neighborhood $Y^-_{n,f}$ of $\hat I^-_{n,f}$ in $\hat Y^-_{n,f}$ and
$y'_{n,f}$ on an $\epsilon$-neighborhood $Y_{0,f}$ of $\hat I_{0,f}$ in $\hat Y_{0,f}$:
\begin{align*}
  y_{n,f}&\colon \hat I^-_{n,f}\times [-\epsilon,\epsilon]\to Y^-_{n,f}&%
  y'_{n,f}&\colon \hat I_{0,f}\times [-\epsilon,\epsilon]\to Y_{0,f};
\end{align*}

so that there exist $\sigma_{n,f}>3^{n}$ and $\lambda_{n, f}$ satisfying for $n\ge 6$ the following properties
\begin{itemize}
\item $y'^{-1}_{n,f} \circ f^n\circ y_{n,f}(x,y)= (\sigma_{n,f} \cdot x, \lambda_{n,f} \cdot y)$, for $(x,y)\in \hat I^+_{n,f}\times
  [-\epsilon ,\epsilon]$;
\item derivatives up to the second order of $(f,x,y)\mapsto y'_{n,f}(x,y)$, $(f,x,y)\mapsto y_{n,f}(x,y)$ and their
  inverses $(f,x,y)\mapsto y'^{-1}_{n,f}(x,y)$, $(f,x,y)\mapsto y_{n,f}^{-1}(x,y)$ are bounded independently of $f\in V$
  which is $C^2$-H\'enon like (even if the determinant of $Df$ is zero!).
\end{itemize}

Since the unfolding of the homoclinic tangency is nondegenerate, for every $C^2$-H\'enon like family $(f_a)_a$, for every
$n\ge 2$, there exists $a_n$, with $a_n\approx-2$ when $n$ is large, so that for $f_a\in V$, $y_{n,f_a}^{-1} \circ
f_a\circ y'_{n,f_a}(x,y)$ is of the form:
\[
y_{n,f_a}^{-1} \circ f_a\circ y'_{n,f_a}(x,y) \colon (p+x, y)%\in D_P
\mapsto (\xi x^2+\theta (a-a_n) +\gamma \cdot y, q+\zeta \cdot x)+ E_a (x, y)\in \R^2,
\]
where $p\in \text{int}\, \hat I_{0} $, $q\in (-\epsilon,\epsilon)$, $\zeta$ are constants, and $\xi$, $\theta$ and $\gamma$ are
non-zero constants (independent of $a$)\footnote{These constants $p$, $q$, $\zeta$, $\xi$, $\theta$ and $\gamma$ depend
  on $n$ and depend smoothly on the H\'enon like family $(f_a)_a$.} and $E_a =(E_{a}^1,E_{a}^2)\in C^{2}(\R\times \R^2,
\R^2)$ satisfies at $P:=(p,0)$:
\begin{equation*}
  \left\{\begin{array}{l}
    E^1_{a_n}(P)=\partial_x E_{a_n}^{1}(P)=\partial_a E_{a_n}^{1}(P)=\partial_y
  E_{a_n}^{1}(P)=\partial_{xx} E_{a_n}^{1}(P)=0\\ 
  E_{a_n}^2(P)=\partial_x E_{a_n}^2(P)=0.
\end{array}\right.
\end{equation*}

By~\cite[Theorem 3.1]{MisuRen}, the family $( y'^{-1}_{n,f_a} \circ f_a^{n+1}\circ y'_{n,f_a})_a$ is conjugated to the
H\'enon $C^2$-like map $\mathcal R_nf_{a^{(n)}}$, with $\mathcal R_n f_{a^{(n)}(a)}= \Psi_{f_a}\circ
y'^{-1}_{n,f_a}\circ f_a ^{n+1}\circ y'_{n,f_a} \circ \Psi_{f_a}^{-1}$, where
\begin{align*}
  \Psi_{f_a}(x,y)&:= \xi\cdot \sigma_{n, f_a}\cdot (x- p ,  \sigma_{n, f_a} {\gamma y}-\sigma_{n, f_a} \lambda_{n, f_a} q),\\
  a^{(n)}(a)&:= \sigma_{n, f_a}^2 \cdot(\xi \theta(a-a_n) +\lambda_{n, f_a} \xi \gamma q-\frac{\xi p}{\sigma_{n,
      f_a}}).
\end{align*}

 Moreover $\mathcal R_n f_{a^{(n)}(a)}$ is H\'enon $C^2$-$\delta$-like with $\delta$ small when $n$ is large.

 \begin{rema}\label{rkdet}%
   If the determinant $b$ of $D f_a(0)$ is small, the determinant of  $D\mathcal R_n f_{a^{(n)}(a)}$ is dominated by $b^{n+1}$ when $n$ is large.
 \end{rema}
\subsection{Our proof of Theorem~\ref{Newhouserev}}
We have now introduced all the results which we need in order to give the
\begin{proof}[Proof of Theorem~\ref{Newhouserev}]
  Let us consider a H\'enon $C^4$-like family $(f_a)_a$: recall that $f_a$ is of the form
  \[f_a\colon (x,y)\mapsto (x^2+a+y,-b x)+(A_a(x,y), b B_a(x,y)).\] Define $P_a(x):= x^2+a+A_a(x,0)$ and $\breve
  f_a(x,y):=(P_a(x)+y,0)$.  For $a<-1$, let $\beta^+_a$ be the orientation preserving fixed point of $P_a$ and
  $\beta^-_a:=(P_a|\R^-)^{-1}(\beta^+_a)$.  We consider $a\approx-2$ such that $\beta^-_a-a$ is small and positive.  For
  such values of $a$ we can define the Cantor set $C_1(P_a)$; the stable thickness of its hyperbolic continuation
  $C_1(f_a)$ is of order $b^2$ by Lemma~\ref{thickness2dsimple}.

  Also for many such values of $a$ we can perform the Misiurewicz pre-renormalization $\mathcal R_n\breve f_{a^{(n)}}$
  and $\mathcal R_n f_{a^{(n)}}$ which we have defined in the previous section, for large $n\ge 2$.  Actually $\mathcal
  R_n\breve f_{a^{(n)}}$ preserves the horizontal line $\R\times \{0\}$ and its restriction is a unimodal map $\mathcal
  R_n P_{a^{(n)}}$.  Let $\epsilon$ be the parameter appearing in Lemmata~\ref{Cn} and~\ref{Thickness2d}; then
  there exists $N_\epsilon$ so that for $N=N_\epsilon$ large enough and $a^{(N)}=a^{(N)}(a)\in [-3,0]$, this
  renormalization is a map $C^2$-$\epsilon$-close to the quadratic map $x^2+a^{(N)}$.  Note that the value of $N_\epsilon$
  depends only on $\epsilon$ and the $C^4$-norm of $(f_a)_a$, as far as the family is $C^2$-H\'enon like.  We assume
  $a^{(N)}$ close to $-2$.  More precisely, if $\beta'^+_{a^{(N)}}$ denotes the orientation preserving fixed point of
  $\mathcal R_N P_{a^{(N)}}$ and $\beta'^-_{a^{(N)}}:=(\mathcal R_N P_{a^{(N)}}|\R^-)^{-1}(\beta'^+_{a^{(N)}})$, we assume
  $b$ and $a$ so that:
  \begin{equation}
    \tag{$\mathcal C_1$}  0<\beta'^-_{a^{(N)}}-a^{(N)}(a)\ll  b^4.\end{equation}
  Hence by Lemma~\ref{Cn}, the Cantor set $C_2(\mathcal R_N P_{a^{(N)}})$ has thickness large with respect to $b^{-2}$.

  Also the same pre-renormalization of $(f_a)_a$ defines a map $(\mathcal R_N f_{a})_a$ which is close to $(\mathcal R_N
  \breve f_{a})_a$.  By Remark~\ref{rkdet}, the determinant $b_N$ of $D_0\mathcal R_N f_{a}$ is of the order of
  $b^{N+1}$, and so $\mathcal R_N f_{a}$ is $C^2$-$b^{N+1}$-close to $\mathcal R_N \breve f_{a}$ (for a large $N$).  In order to apply
  Lemma~\ref{Thickness2d}, we assume $b$ and $a$ so that at $a^{(N)}=a^{(N)}(a)$:
  \begin{equation}
    \tag{$\mathcal C_2$} 0<b^{n+1}\sim b_N\ll  \left(\beta'^-_{a^{(N)}}-a^{(N)}\right)^2.
  \end{equation}
  Then, by Lemma~\ref{Thickness2d}, there exist a constant $C$ and a hyperbolic horseshoe $C_2(\mathcal R_N f_{a^{(N)}})$ of
  stable thickness greater than $C/\sqrt{\beta'^-_{a^{(N)}}-a^{(N)}}\gg  C/b^{2}$.

  We remark that conditions $(\mathcal C_1)$ and $(\mathcal C_2)$ are simultaneously possible as far as $b^{8}$ is
  large with respect to $b^{N+1}$ (e.g. we shall assume $N\ge 8$).  Moreover, for $N$ large but bounded, condition $(\mathcal C_2)$ remains possible
  for $b$ arbitrarily small.

  The hyperbolic horseshoe $C_2(\mathcal R_N f_{a^{(N)}})$ is embedded into the following basic set of $f_a$:

  \[K_2(f_a):= \bigcup_{k=0}^{N-1} f^k_a(y'_{N,a} \circ \Psi_{f_a}^{-1}( C_2(\mathcal R_N f_{a^{(N)}}) )),\]
  where $y'_{N,a}$ and $\Psi_{f_a}^{-1}$ are the charts defining the Misiurewicz renormalization.

  Since $C_2(\mathcal R_Nf_{a^{(N)}})$ is embedded into $K_2(f_a)$, we conclude that $K_2(f_a)$ has also stable thickness
  large with respect to $b^2$.  Consequently,
  \begin{equation}
    \tau^u(C_1(f_a))\cdot \tau^s(K_{2}(f_a)\gg 1.\label{e_thickHorseshoes} 
  \end{equation}
  We notice that, for some Riemannian metric, the expansion of $C_1(f_a)$ is close to $2$, whereas the expansion of $K_2(f_a)$
  is close to $\sqrt[N]{2}$.  This observation will be useful to prove condition $(i)$ of Theorem~\ref{Newhouserev}.

  Note that if we prove that there exist a basic set $\tilde K$ which contains both $C_1(f_a)$ and $K_{2}(f_a)$ and
  which has a homoclinic tangency, then by~\eqref{e_thickHorseshoes} and the Gap Lemma~\ref{GapLemma} it is a wild
  hyperbolic set; this implies item (ii) of our statement.  Let us first take care of constructing $\tilde K$: the
  reader might find useful to refer to Figure~\ref{f_construction} while reading through the description which follows.

  We will use the following result, which is indeed
  just a rephrasing of~\cite[Lemma 8]{Ne79}:
  \begin{lemm}\label{l_lemma8Newhouse}
    Assume there exist $z_1, z'_1\in C_1(f_a)$ and $z_2,z_2'\in K_{2}(f_a)$, local unstable manifolds $W^u_{\loc} (z_1)$,
    $W^u_{\loc} (z_2)$ and local stable manifolds $W^s_{\loc} (z'_1)$, $W^s_{\loc} (z'_2)$ such that $W^u_{\loc} (z_1)$
    intersects $W^s_{\loc} (z'_2)$ transversely at a point $w$, $W^u_{\loc} (z_2)$ intersects $W^s_{\loc} (z'_1)$
    transversely at a point $w'$, and $w,w'\notin C_1(f_a)\cup K_2(f_a)$.  Then there exists a basic set $\tilde K_a$
    containing both $C_1(f_a)$ and $K_{2}(f_a)$.
  \end{lemm}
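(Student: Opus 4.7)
The plan is to construct $\tilde K_a$ as the maximal invariant set inside a small isolating neighborhood of the heteroclinic cycle
\[
C_1(f_a)\cup K_{2}(f_a)\cup \bigcup_{n\in\Z} f_a^n(\{w,w'\}).
\]
The four transverse intersections give a cycle $C_1\leftrightarrows K_2$ which is the structural feature that allows one to \emph{glue} the two basic sets into a larger transitive hyperbolic set. The argument is a classical application of the inclination ($\lambda$-)lemma together with the shadowing lemma for hyperbolic sets, as in~\cite[Lemma 8]{Ne79}; I would only outline the main steps.

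First, I would verify that the closure
\[
\Lambda := \overline{C_1(f_a)\cup K_{2}(f_a)\cup\bigcup_{n\in\Z} f_a^n(\{w,w'\})}
\]
is a compact, $f_a$-invariant hyperbolic set. Compactness and invariance follow from the fact that the $\alpha$- and $\omega$-limit sets of $w$ and $w'$ are contained in $C_1(f_a)\cup K_{2}(f_a)$. To produce a hyperbolic splitting along the orbit of $w$, I would take $T_{w}W^u_{\loc}(z_1)$ and $T_{w}W^s_{\loc}(z_2')$ as candidates for $E^u$ and $E^s$ at $w$; since these two lines are transverse, propagating them by $Df_a$ and $Df_a^{-1}$ and invoking the $\lambda$-lemma one shows that they converge exponentially, as $n\to\pm\infty$, to the hyperbolic splittings of $K_{2}(f_a)$ and $C_1(f_a)$ respectively, and the same for $w'$ with the roles reversed. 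The hypothesis $w,w'\notin C_1(f_a)\cup K_{2}(f_a)$ is precisely what rules out ambiguity in the splitting on the horseshoes themselves.

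Next, I would pick a sufficiently small open neighborhood $U$ of $\Lambda$ and define $\tilde K_a:=\bigcap_{n\in\Z} f_a^n(\overline U)$. Openness of uniform hyperbolicity yields hyperbolicity of $\tilde K_a$. Transitivity is obtained by shadowing: given two nonempty relatively open subsets $V_0,V_1\subset \tilde K_a$, one can form a pseudo-orbit which starts in $V_0$, follows a dense orbit of the horseshoe containing $V_0$, switches to the other horseshoe along an iterate of $w$ (or $w'$) by using transversality, and then follows a dense orbit there to reach $V_1$; the shadowing lemma produces an actual orbit within $\tilde K_a$ realizing this pseudo-orbit. Density of periodic points follows from the standard specification/shadowing argument for locally maximal hyperbolic sets. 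By construction $\tilde K_a\supset C_1(f_a)\cup K_{2}(f_a)$, which is the desired conclusion.

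The main delicate point, as anticipated, is the verification of the hyperbolic splitting along the two heteroclinic orbits: one needs the transverse intersection directions at $w$ and $w'$ to glue consistently with the splittings of the two horseshoes under forward and backward iteration, which is precisely where the $\lambda$-lemma together with transversality is essential. All other steps (transitivity, density of periodic points, local maximality) are routine applications of the shadowing machinery once uniform hyperbolicity of $\Lambda$ has been established.
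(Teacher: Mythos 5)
The paper does not prove this lemma; it is cited as a rephrasing of~\cite[Lemma 8]{Ne79}, whose argument is geometric and combinatorial: one builds a larger family of Markov rectangles out of those for $C_1(f_a)$, those for $K_2(f_a)$, and rectangles covering the orbits of $w$ and $w'$, and verifies via the inclination lemma that high iterates of each rectangle stretch across the others, yielding an enlarged horseshoe. Your softer route through the $\lambda$-lemma and shadowing is a legitimate alternative, and the identification of the hyperbolic splitting of $\Lambda$ along the orbits of $w$ and $w'$ (taking $T_wW^u_{\loc}(z_1)$ and $T_wW^s_{\loc}(z_2')$ as candidate directions and propagating them with the inclination lemma) is set up correctly.

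There is, however, a gap in your transitivity step. You take $\tilde K_a$ to be the maximal invariant set in a small neighborhood $U$ of $\Lambda$. That set is hyperbolic and locally maximal, but there is no reason for it to be transitive: it is typically strictly larger than $\Lambda$ (which is the whole point of enlarging), so a relatively open subset $V_0\subset\tilde K_a$ may contain no point of $C_1(f_a)\cup K_2(f_a)$, and then the phrase ``follows a dense orbit of the horseshoe containing $V_0$'' has no meaning and the pseudo-orbit you intend to shadow never gets started. The standard repair is Smale's spectral decomposition: since $\tilde K_a$ is locally maximal hyperbolic, its nonwandering set decomposes into finitely many basic sets, and the two transverse heteroclinic intersections at $w$ and $w'$ (propagated by the inclination lemma from local to global invariant manifolds of periodic orbits) show that periodic points of $C_1(f_a)$ and of $K_2(f_a)$ are heteroclinically related in both directions, hence lie in the same piece of the decomposition; that piece is the desired basic set. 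Alternatively, and closer in spirit to Newhouse's construction, define $\tilde K_a$ directly as the homoclinic class of a periodic point $p\in C_1(f_a)$; the $\lambda$-lemma together with the heteroclinic cycle forces $K_2(f_a)\subset\tilde K_a$, and the Birkhoff--Smale machinery gives transitivity and density of periodic points at once. Either way, the transitivity step you dismissed as routine is where the real content of the lemma resides, whereas the hyperbolicity of $\Lambda$ that you flagged as the delicate point is, by comparison, standard.
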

  Moreover, in order to prove condition $(i)$ of Theorem~\ref{Newhouserev}, we shall also prove that the intersection
  points $w,w'$ enjoy an expansion which is uniformly bounded from below when $b$ is small.

  To this end, let us go back to the one dimensional dynamics, which appears at the limit when $b\to0$.  More
  precisely, when $b$ approaches $0$, Conditions $(\mathcal C_1)$ and $(\mathcal C_2)$ imply $a$ to be such that $a\in
  I_N$, and $P^{N+1}_a|P^{-1}_a(I_N)$ is conjugated to the Chebyshev map (both restricted to their maximal invariant
  compact set).

  Let $\beta^+_N$ be the orientation preserving fixed point of $P_a^{N+1}|P^{-1}(I_N)$.  Let $\beta^-_N$ be the preimage
  of $\beta^+_N$ by $P_a^{N+1}|P_a^{-1}(I_N)$.  From classical unimodal theory, the maximal invariant set:
  \[
  K_1(P_a):= \bigcap_{n\ge 0} P^{-n}_a ([\beta^-_a,\beta^+_a]\setminus (\beta^+_N, \beta^-_N))
  \]
  is hyperbolic (one can even show that its expansion is at least of the order of $\sqrt[N]{2}$) and transitive.  This
  basic set persists as a basic set $K_1(f_a)$ for $b$ small, with a similar expansion.

  We notice that $K_1(f_a)\cap K_2(f_a)$ contains the hyperbolic continuation $\beta_N^+(f_a)$ of $\beta^+_N$.  Since
  both $K_1(f_a)$ and $K_2(f_a)$ are hyperbolic Cantor sets, we can find $z_1, z'_1\in K_1(f_a)$ and $z_2,z_2'\in
  K_{2}(f_a)$ all close to $\beta_N^+(f_a)$ such that $W^u_{\loc} (z_1)$ intersects $W^s_{\loc} (z'_2)$ transversely at a
  point $w$, $W^u_{\loc} (z_2)$ intersects $W^s_{\loc} (z'_1)$ transversely at a point $w'$, and $w,w'\notin K_1(f_a)\cup
  K_2(f_a)$.  Hence, using Lemma~\ref{l_lemma8Newhouse} we prove the existence of a basic set $\tilde K_a$ containing
  both $K_1(f_a)\supset C_1(f_a)$ and $K_2(f_a)$, with expansion bounded from below uniformly on $b$ (actually at least
  of the order of $\sqrt[N]{2}$).

  In order to conclude the proof, we now need to show that the basic set $\tilde K_a$ has a homoclinic tangency for some
  $a$ which satisfies $(\mathcal C_1)$ and $(\mathcal C_2)$; to this end observe that the local stable manifold
  $W^s_{\loc}(\beta_N^+(f_a))$ is $b$-$C^2$-close to the parabola
  \[
  \{(x,y)\in [-1,+\infty):\; x^2+y=\beta_N^+\}.
  \]
  Since $K_1(f_a)$ contains $\beta_N^+(f_a)$, the local stable manifold $W^s_{\loc}(\beta_N^+(f_a))$ is $C^2$-accumulated by
  other local stable manifolds $(W^s_n(a))_n$ of points in $K_1(f_a)$; the parabolas $W^s_n$ are above
  $W^s_{\loc}(\beta_N^+(f_a))=:W^s_\infty(a)$.

  % By linearizing the unstable direction of $\beta^+(f_a)$, the distance between $W^s_n$ and
  % $W^s_{\loc}(\beta^+_N(f_a))$ can be
  % chosen of the order of $\sigma^{-n}$, where $\sigma$ is the unstable eigenvalue of $D_{\beta^+(f_a)}f_a^{N+1}$.

  If $\sigma$ denotes the unstable expansion of $Df^{N+1}(\beta_N^+(f_a))$, there exists $C>0$ independent of $(f_a)_a$
  (and so of $b$), but depending on $N$ already fixed, such that we can assume, for any $n$:
  \[\sigma^{-n}/C < d( W^s_n(a) , W^s_\infty(a))< C\sigma^{-n}.\]
  By taking $C$ larger, we can examine the images of $( W^s_n )_{n\in \{0,\dots,\infty\}}$ by the renormalization chart
  $\phi_a:= \Psi_{f_a}\circ y_{N,f_a}'^{-1}$ and obtain, for any $n$:
  \[\sigma^{-n}/C < d(\phi_a( W^s_n) , \phi_a(W^s_\infty(a)))< C\sigma^{-n}\] 
  Furthermore, the parameter dependence of $(\phi_a( W^s_n))_{n\in \{0,\dots,\infty\}}$ depends continuously on $n$ in
  the one point compactification $\{0,\dots,\infty\}$ of $\mathbb N$.

  % and  $\phi_a(W^s_{\loc} (\beta_N^+(f_a)))$ is bounded by a constant time $\sigma_{N,f_a}$ (which was defined in \ref{baspage12}).

  On the other hand, a certain local unstable manifold $W^u$ of $\phi_a(K_2(f_a))$ is $b^{N+1}$-$C^2$-close to the curve
  \[
  \{\mathcal R_N f_{a^{(N)}(a)} (t,0):\; t\in[-1,1]\}.
  \]
  This curve has a fold which is $b^{N+1}$-close to $(a^{(N)}(a),0)$.  Also, there exists $a^{(N)}_\infty$ which is 
  $b^{N+1}$-close to $\beta'^{-}_{a^{(N)}_\infty}$ so that $W^u(a^{(N)}_\infty)$ is tangent to the local stable manifold
  $\phi_a( W^s_\infty(a^{(N)}_\infty))$, and the unfolding of this tangency is nondegenerate. So there are parameters
  $a_n$ % so that $(a^{(N)}(a_n))_n$
  so that $W^u(a^{(N)}(a_n))$ is tangent to the local stable manifold $\phi_a( W^s_\infty(a^{(N)}(a_n)))$ and
  satisfy (by taking $C$ larger) for any $n$:
  \[\sigma^{-n}/C < d(a^{(N)}_n , a^{(N)}_\infty)< C\sigma^{-n}.\]
  Consequently there exists $n$ so that $a^{(N)}(a_n)$ satisfies $(\mathcal C_1)$ and $(\mathcal C_2)$.
\end{proof}

\bibliographystyle{alpha}
\bibliography{references}

\end{document}